\title{Metric stretching and the period map for smooth 4-manifolds}
\author{Christopher Scaduto} 
\date{}
\definecolor{mint}{HTML}{239B56}
\pgfplotsset{compat=newest}
\definecolor{greenish}{rgb}{0.01, 0.75, 0.24}
\definecolor{blueish}{rgb}{0.0, 0.72, 0.92}
\definecolor{orangeish}{rgb}{1.0, 0.55, 0.0}
\newcolumntype{Y}{>{\centering\arraybackslash}X}
\newcommand{\R}{\mathbb{R}}
\newcommand{\Z}{\mathbb{Z}}
\newcommand{\Q}{\mathbb{Q}}
\newtheorem{theorem}{Theorem}[section]
\newtheorem{prop}[theorem]{Proposition}
\newtheorem{lemma}[theorem]{Lemma}
\newtheorem{corollary}[theorem]{Corollary}
\newtheorem{remark}[theorem]{Remark}
\newtheorem{example}[theorem]{Example}
\newcommand{\Addresses}{{
  \bigskip
  \footnotesize
Christopher Scaduto, \textsc{Department of Mathematics, University of Miami, Coral Gables, FL USA}\par\nopagebreak
  \textit{E-mail address}: \texttt{cscaduto@miami.edu}
}}
\begin{document}

\maketitle

\begin{abstract}
    The period map for a smooth closed $4$-manifold assigns to a Riemannian metric the space of self-dual harmonic $2$-forms. This map is from the space of metrics to the Grassmannian of maximal positive subspaces in the second cohomology, where positivity is defined by cup product. We show that the period map has dense image for every $4$-manifold, and that it is surjective if $b^+=1$. Similar results hold for manifolds of dimension a multiple of four.  The proofs involve families of metrics constructed by stretching along various hypersurfaces. 
\end{abstract}

\vspace{.2cm}

\section{Introduction}
Let $X$ be a smooth, closed, connected, oriented $4$-manifold. The de Rham cohomology $H^2(X)$ has a symmetric bilinear form which integrates wedge products of 2-forms. A subspace $H\subset H^2(X)$ is {\emph{positive}} if the bilinear form on $H$ is positive definite, and is further {\emph{maximal}} if it is not properly contained inside a positive subspace. All maximal positive subspaces have the same dimension, denoted $b^+(X)$. The space of maximal positive subspaces of $H^2(X)$, written $\text{Gr}^+(H^2(X))$, is an open subset of the Grassmannian of all $b^+(X)$-dimensional planes in $H^2(X)$.\\

Let $g$ be a Riemannian metric on $X$. By Hodge theory, the space of $g$-harmonic $2$-forms $\mathcal{H}^2_g(X)$ maps isomorphically to $H^2(X)$ via $\omega\mapsto [\omega]$. The Hodge star $\star_g$ is an involution on $\mathcal{H}^2_g(X)$, and we write $\mathcal{H}^+_g(X)$ for the $(+ 1)$-eigenspace of $\star_g$, the $g$-self-dual $2$-forms, which we identify with its image in $H^2(X)$. Then $\mathcal{H}^+_g(X)$ is a maximal positive subspace of $H^2(X)$. The assignment
\begin{gather}
    \Pi_X:\text{Met}(X)\longrightarrow \text{Gr}^+(H^2(X)) \label{eq:periodmap}\\
    g\longmapsto \mathcal{H}^+_g(X) \nonumber
\end{gather}
is the {\emph{period map}} of $X$. In fact, $\mathcal{H}^+_g(X)$ is unchanged if $g$ is replaced by a conformally equivalent metric, and so $\Pi_X$ descends to the space of conformal classes of metrics.\\

In \cite[Ch. 20]{katz}, it is conjectured that $\Pi_X$ is always surjective. This problem has connections to symplectic topology. Given a symplectic form $\omega$ on $X$, and more generally a near symplectic form, one can construct a metric on $X$ for which $\omega$ is harmonic and self-dual, see \cite[Prop. 1]{adk}. In particular, if $b^+(X)=1$, then $\R\cdot [\omega]\in \text{im}(\Pi_X)$. For a given {\emph{integral}} class $w\in H^2(X)$, Gay and Kirby \cite{gaykirby} construct a near symplectic form $\omega$ with $[\omega]=w$. This implies $\text{im}(\Pi_X)$ is dense when $b^+(X)=1$. Our first result is that this holds more generally.\\

\begin{theorem}\label{thm:main}
     For a smooth, closed, connected, oriented $4$-manifold, $\Pi_X$ has dense image.
\end{theorem}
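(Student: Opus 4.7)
The plan is to combine the known $b^+(X)=1$ density result (a consequence of Gay--Kirby's near symplectic construction) with a hypersurface stretching argument that effectively reduces the general $b^+(X)\geq 2$ case to several copies of the $b^+=1$ situation. First, since rational positive subspaces are dense in $\text{Gr}^+(H^2(X))$, and any such subspace admits an integral orthogonal basis $\alpha_1,\ldots,\alpha_{b^+}\in H^2(X;\Z)$ with $\alpha_i^2>0$, it suffices to approximate $\R\cdot \alpha_1+\cdots+\R\cdot \alpha_{b^+}$ by self-dual subspaces $\mathcal{H}^+_g(X)$.

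Next, I would set up the neck-stretching family. Fix an embedded, closed, two-sided hypersurface (or disjoint union of such) $Y\subset X$, and a metric $g$ in which a neighbourhood of $Y$ is isometric to a product $Y\times[-1,1]$. For $T\geq 1$, let $g_T$ denote the metric obtained by inserting $Y\times[-T,T]$. Standard neck-stretching analysis for harmonic forms shows that $\mathcal{H}^+_{g_T}(X)$ converges in $\text{Gr}^+(H^2(X))$ as $T\to\infty$ to a limiting subspace $\mathcal{H}^+_\infty(Y,g)$, described by $L^2$-self-dual harmonic forms on the cylindrical-end completion of $X\setminus Y$. When $Y$ decomposes $X$ into pieces $V_1,\ldots,V_k$, this limit decomposes via Mayer--Vietoris as essentially a direct sum of the $L^2$ self-dual subspaces on the individual cylindrical-end pieces; the key flexibility is that the self-dual subspace on each $V_j$ can be chosen independently by varying the metric there.

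The strategy is then to find a decomposition of $X$ in which each $\alpha_i$ is localised on a single piece $V_{j(i)}$, and to adjust the metric on that piece so that its $L^2$-self-dual subspace contains a class mapping to $\alpha_i$ under Mayer--Vietoris. On a piece of cylindrical-end $b^+_{L^2}=1$, one invokes a bordered analogue of Gay--Kirby to realise the prescribed integral class as a near-symplectic harmonic form. Assembling these local metrics into a single $g$ and taking $T$ large then produces metrics $g_T$ whose self-dual subspaces approximate $\R\cdot\alpha_1+\cdots+\R\cdot\alpha_{b^+}$, and density of $\Pi_X$ follows from continuity.

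The main obstacle is geometric: one must produce a hypersurface system $Y$ and an assignment of the integral classes $\alpha_i$ to pieces compatible with Mayer--Vietoris, so that the desired cohomological decomposition actually arises. A natural approach is to choose each $V_{j(i)}$ as a tubular neighbourhood of a smoothly embedded surface realising the Poincar\'e dual of $\alpha_i$ (with controlled normal Euler number), or to perform circle or sphere surgeries that simplify the topology of each piece without destroying the relevant integral classes. Parallel to this, one must verify that the $b^+=1$ density result extends to the cylindrical-end setting needed on each piece, and that the neck-stretching convergence above is uniform enough to pass from the limiting subspace $\mathcal{H}^+_\infty$ back to $\mathcal{H}^+_{g_T}$ for large finite $T$.
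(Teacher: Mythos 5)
Your skeleton---approximate by a rational positive subspace spanned by orthogonal integral classes $\alpha_1,\ldots,\alpha_{b^+}$ with $\alpha_i^2>0$, take disjoint embedded surfaces dual to them, stretch along the boundaries of their tubular neighbourhoods, and identify $\lim_{T\to\infty}\mathcal{H}^+_{g_T}$ with $L^2$ self-dual harmonic forms on the cylindrical-end pieces---is the same as the paper's. The genuine gap is in how you pin down the limiting subspace. You propose to \emph{prescribe} the $L^2$ self-dual space on each piece by invoking a ``bordered analogue of Gay--Kirby'' on cylindrical-end pieces with $b^+_{L^2}=1$, together with the claimed ``flexibility'' of choosing these subspaces independently by varying the metric on each piece. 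No such bordered realization result is established, and proving it is essentially a relative version of the very density statement being proved (compare Remark \ref{rem:2}), so the argument is circular at its key analytic step. You also gloss over the Mayer--Vietoris correction term: by Proposition \ref{prop:main}, in the form \eqref{eq:propeqalt}, the limit is $H_1^+ + \mathrm{im}(\delta) + H_2^+$, not simply a direct sum of the pieces' self-dual spaces, and unless $\mathrm{im}(\delta)$ is shown to vanish (or lie where you want it), the limit need not be close to the target subspace.

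The point you miss is that with the right choice of pieces nothing has to be realized at all. Take $W_i$ to be a disk-bundle neighbourhood of a connected surface dual to (a multiple of) $\alpha_i$; since $\alpha_i^2>0$ the circle-bundle boundary has nonzero Euler class, so the Gysin sequence forces $H^2(W_i)\to H^2(\partial W_i)$ to vanish, hence $\mathrm{im}(\delta)=0$. Each $\widehat{H}^2(W_i)$ is one-dimensional and positive definite, so its $L^2$ self-dual space equals $\R\cdot\alpha_i$ for \emph{any} cylindrical-end metric; and by the additivity of $b^+$ across the splitting (Lemmas \ref{lemma:mv1} and \ref{lemma:mv2}) the complementary piece is negative definite and contributes zero. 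Thus the limit is exactly $\bigoplus_i \R\cdot\alpha_i$, with no bordered Gay--Kirby input, no metric adjustment on the pieces, and no near-symplectic geometry anywhere in the argument.
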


\vspace{.2cm}

The idea of the proof is as follows. Consider disjoint embedded connected surfaces $\Sigma_i\subset X$ whose Poincar\'{e} duals span a maximal positive subspace $H\subset H^2(X)$. Such $H$ form a dense subset of $\text{Gr}^+(H^2(X))$. Let $Y_i$ be the boundary of a disk-bundle neighborhood of $\Sigma_i$, so that $Y_i$ is a circle bundle over $\Sigma_i$. Consider a metric $g$ on $X$ that is a product metric on pairwise disjoint collar neighborhoods $Y_i\times (-1,1)\subset X$. Form a 1-parameter family of metrics $\{g(r)\}_{r\geq 1}$ on $X$, with $g=g(1)$ which, roughly, stretches along each of the collar neighborhoods. Then
\begin{equation}\label{eq:firstlimit}
    \lim_{r\to \infty}\mathcal{H}^+_{g(r)}=H,
\end{equation}
and the proof follows. A more precise version of \eqref{eq:firstlimit} is given in Proposition \ref{prop:main}, which is proved using standard gluing theory of harmonic forms. \\

Our second result regards the surjectivity of the period map. 

\vspace{.2cm}

\begin{theorem}\label{thm:main2}
     If in addition $b^+(X)=1$, then the period map $\Pi_X$ is surjective.
\end{theorem}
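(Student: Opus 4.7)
The plan is to bootstrap Theorem \ref{thm:main} to surjectivity through near-symplectic geometry, using two features specific to $b^+(X)=1$: first, $\text{Gr}^+(H^2(X))$ identifies with the projectivization of the positive cone in $H^2(X;\R)$, a connected open convex subset of $\mathbb{P}(H^2(X))$; second, by \cite[Prop.~1]{adk}, any near-symplectic form $\omega$ is harmonic and self-dual for some metric $g$, and the $b^+=1$ condition then forces $\mathcal{H}^+_g = \R\cdot\omega$, so $\Pi_X(g) = \R\cdot[\omega]$. It therefore suffices to prove that every class $\ell \in H^2(X;\R)$ with $\ell^2>0$ admits a near-symplectic representative.

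Gay and Kirby \cite{gaykirby} give this for integer classes, and rescaling extends it to all rational classes --- this is the source of the density statement for $b^+=1$ mentioned in the introduction. My plan to extend to arbitrary real positive classes is perturbative. Fix a reference near-symplectic form $\omega_0$ representing a rational class $\ell_0$ close to the target $\ell$, then add a closed correction $\eta$ with $[\eta] = \ell - \ell_0$, realized as the harmonic representative of $\ell - \ell_0$ for an auxiliary fixed metric. By Hodge theory, $\|\eta\|_{C^k}$ is controlled by the norm of $\ell - \ell_0$, which can be made arbitrarily small by density of rational classes in the positive cone. Away from the singular locus of $\omega_0$ (where $\omega_0 \wedge \omega_0 = 0$), the near-symplectic condition $\omega \wedge \omega > 0$ is $C^0$-open, so for sufficiently small $\eta$ the perturbed form $\omega = \omega_0 + \eta$ remains non-degenerate there.

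The main obstacle is the analysis along the $1$-dimensional singular locus of $\omega_0$, where even small perturbations can in principle destroy transversality. Gay-Kirby's construction, however, produces $\omega_0$ as a \emph{transverse} section of $\Lambda^+$ along its singular locus, and such transversality is $C^1$-open in the space of closed $2$-forms. A sufficiently small closed perturbation $\eta$ should therefore only slightly deform and translate the singular locus without destroying the near-symplectic structure. Making this stability precise --- possibly by cutting off $\eta$ away from the singular locus and correcting by an exact form to preserve cohomology, then invoking the explicit Gay-Kirby local model --- is the core technical step. An alternative to this perturbative route, more in keeping with the metric-stretching framework of the paper, would be a multi-parameter refinement of the construction in Proposition \ref{prop:main}: stretching along several hypersurfaces $\partial N(\Sigma_i)$ with independent rates and identifying the limit of $\mathcal{H}^+_{g(\mathbf{r})}$ as a continuous function of the asymptotic ratios, then using openness of the resulting parameterization in $\text{Gr}^+(H^2(X))$ and convexity to cover irrational lines.
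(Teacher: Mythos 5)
Your reduction is sound: by \cite[Prop.~1]{adk} a near-symplectic form $\omega$ is self-dual harmonic for some metric $g$, and $b^+(X)=1$ forces $\mathcal{H}^+_g=\R\cdot[\omega]$, so surjectivity would follow if \emph{every} real class $\ell$ with $\ell^2>0$ had a near-symplectic representative. But that statement is exactly where the difficulty lives --- Gay--Kirby \cite{gaykirby} gives it only for integral (hence rational) classes, which is why the introduction credits them with density, not surjectivity --- and the perturbative mechanism you offer to cross from rational to irrational classes does not work as stated. The obstruction is not transversality of the zero circles $Z$ but the sign of $\omega\wedge\omega$ near $Z$, and this is \emph{not} an open condition on closed $2$-forms. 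Concretely, let $g$ be the metric adapted to $\omega_0$ and decompose the $g$-harmonic representative of $\ell-\ell_0$ as $\eta=c\,\omega_0+\eta^-$ (its self-dual part must be proportional to $\omega_0$ since $b^+=1$); then pointwise $(\omega_0+\eta)\wedge(\omega_0+\eta)=\bigl((1+c)^2|\omega_0|^2-|\eta^-|^2\bigr)\,\mathrm{vol}_g$, which is strictly negative on an annular neighborhood of every point of $Z$ where $\eta^-\neq 0$, no matter how small $\|\eta\|$ is. So ``$C^1$-openness of transversality'' cannot rescue the construction; the near-symplectic condition genuinely degenerates along $Z$. The only viable version is the variant you mention in passing: choose a representative of $\ell-\ell_0$ supported away from $Z$ (possible since tubular neighborhoods of the zero circles are copies of $S^1\times B^3$, whose second de Rham cohomology vanishes) and then verify $\omega^2>0$ on the transition region using $|\omega_0|\gtrsim \mathrm{dist}(\cdot,Z)$ together with quantitative control of the cut-off primitive. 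That estimate, which you explicitly defer as ``the core technical step,'' is the entire content of the extension, so as written the argument has a genuine gap rather than a finished proof.

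For comparison, the paper avoids near-symplectic geometry altogether: it identifies $\mathrm{Gr}^+(H^2(X))$ with $\mathbb{H}^n$, builds for each rational hyperbolic simplex $\Delta_V$ a family of metrics parametrized by the permutahedron $P_n$ obtained by stretching along boundaries of regular neighborhoods of surface configurations, constrains the image of each face of $P_n$ via Proposition \ref{prop:periodmapfaces}, and concludes with the degree-type argument of Lemma \ref{lemma:surjectivemapofpolytopes} that $\Pi_{X,V}(P_n)\supset\Delta_V$; since every point of $\mathbb{H}^n$ lies in the interior of such a simplex, surjectivity follows. Your closing ``alternative'' is a gesture in this direction, but one-parameter limits as in Proposition \ref{prop:main} together with ``openness and convexity'' do not suffice: one needs the multi-parameter face analysis and a topological surjectivity argument onto the simplex, which is precisely what the permutahedron construction supplies.
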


\vspace{.25cm}

The proof of Theorem \ref{thm:main2} builds upon that of Theorem \ref{thm:main}, using higher dimensional families of metrics, parametrized by certain polytopes. The construction of these metric families is adapted from the work of Kronheimer and Mrowka \cite{km-unknot}, further explored by Bloom \cite{bloom}; see also the earlier work \cite{kmos}. Roughly, the families we use are constructed by stretching along boundaries of regular neighborhoods of various surface configurations in $X$. A proper face on a polytope parametrizes metrics that are entirely stretched along some non-empty hypersurface, and, similar to property \eqref{eq:firstlimit}, there are constraints on where $\Pi_X$ can send these faces.\\

In the case that $b^+(X)=1$, $\text{Gr}^+(H^2(X))$ can be identified with hyperbolic space $\mathbb{H}^n$ where $n=b_2(X)-1$. We show that for any (rational) hyperbolic $n$-simplex $\Delta_n$ in $\mathbb{H}^n$, there is a family of metrics parametrized by an $n$-dimensional permutahedron $P_n$ for which the image of the interior of $P_n$ under $\Pi_X$ contains the interior of $\Delta_n$. From this, Theorem \ref{thm:main2} follows.\\

Note that the Grassmannian of maximal positive subspaces $H\subset H^2(X)$ is naturally identified with the Grassmannian of maximal negative subspaces by the assignment $H\mapsto H^\perp$. The period map of the orientation-reversal of $X$ is determined by that of $X$ and this identification. In particular, Theorem \ref{thm:main2} implies that $\Pi_X$ is also surjective if $b^-(X)=1$.\\

The surjectivity of $\Pi_X$ when $b^+(X)=1$ and $X$ is symplectic and minimal (no embedded $2$-spheres of self-intersection $-1$) follows from previous work of Li and Liu \cite{liliu}, who show that under these hypotheses, any $w\in H^2(X)$ with $w^2>0$ is represented by a symplectic form. Without the hypothesis of minimality, their work also implies $\text{im}(\Pi_X)$ is dense when $X$ is symplectic and $b^+(X)=1$. For a proof using similar ingredients, in the case of blow-ups of $\mathbb{C}\mathbb{P}^2$, see \cite[Appendix A]{katz-book}. These proofs use Seiberg--Witten monopoles and pseudo-holomorphic curves. See also Biran \cite[Thm. 3.2]{biran}. Note that our results do not require $X$ to be symplectic, and the proofs we present do not use any gauge theory or pseudo-holomorphic curve theory. \\

Theorems \ref{thm:main} and \ref{thm:main2} hold more generally for $4k$-dimensional manifolds, with essentially the same proofs. See Remarks \ref{rem:1} and \ref{rem:12}.\\

The tracking of period points is central to the wall-crossing phenomena of Donaldson and Seiberg--Witten invariants of closed $4$-manifolds, see \cite{donaldson-irr, kotschick, li-liu-families}. Stretching a metric in a neighborhood of a surface is a main mechanism in proving adjunction inequalities \cite{km-genus,km-embedded}. Further, families of metrics parametrized by polytopes have been important in studying the structure of Floer theories derived from gauge theory; see the already-mentioned works \cite{km-unknot,kmos,bloom}. The author is indebted to the ideas from these works.\\

\vspace{.2cm}

\noindent \textbf{Conformal systoles}

\vspace{.15cm}

\noindent Questions regarding the surjectivity of the period map, and the density of its image, were considered by Katz \cite{katz} in the setting of conformal systoles. Let $X$ be a smooth, closed, oriented manifold of dimension $2n$. The {\emph{conformal $n$-systole}} of $X$ equipped with a Riemannian metric $g$ is defined as
\[
    \text{conf}_{n}(X,g) = \min \left\{  |w|_{g} \;\; \bigl\vert \;\; w\in H^n(X)_\Z \setminus \{0\} \right\}
\]
where $H^n(X)_\Z$ is the image of $H^n(X;\Z)$ in the real cohomology $H^n(X)$, and $|\cdot |_{g}$ is the norm on $H^n(X)$ induced by the $L^2$ norm on $g$-harmonic forms. If $\dim X = 4k$, then $\text{conf}_{2k}(X,g)$ only depends on the image of $g$ under the period map, and continuously so. If $\dim X=2n$, define
\[
    CS(X) = \sup_{g}\;\text{conf}_{n}(X,g)
\]
where the supremum is over all metrics on $X$. The quantity $CS(X)$ for $4$-manifolds was studied in \cite{katz, hamilton}. Generalizing results there, Theorem \ref{thm:main} and its analogues in $\dim=4k$ imply:\\

\begin{corollary}\label{cor:systoles}
    If $\dim X = 4k$, then $CS(X)$ is determined by $H^{2k}(X)_\Z$ with its cup product.
\end{corollary}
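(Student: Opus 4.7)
The plan is to show that the conformal systole $\text{conf}_{2k}(X,g)$, viewed as a function of $g$, factors through the period map, that the resulting function on $\text{Gr}^+(H^{2k}(X))$ is continuous, and then to invoke the density of $\text{im}(\Pi_X)$ to conclude that the supremum $CS(X)$ depends only on the cup product and lattice data of $H^{2k}(X)_\Z$.

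First I would make explicit the factorization already asserted in the excerpt. Given $w \in H^{2k}(X)$, Hodge theory represents $w$ by a harmonic form which decomposes as $w = w^+ + w^-$ with $w^{\pm} \in \mathcal{H}^{\pm}_g(X)$. Since $\star_g$ acts as $\pm 1$ on $\mathcal{H}^{\pm}_g$, the spaces $\mathcal{H}^+_g$ and $\mathcal{H}^-_g$ are orthogonal with respect to the cup product pairing, which is positive definite on $\mathcal{H}^+_g$ and negative definite on $\mathcal{H}^-_g$. A short computation then gives
\[ |w|_g^2 \;=\; (w^+ \cup w^+) - (w^- \cup w^-), \]
with $w^{\pm}$ being the cup-product orthogonal projections of $w$ onto $\mathcal{H}^{\pm}_g$ and its complement. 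Thus $|w|_g$ depends on $g$ only through $\mathcal{H}^+_g = \Pi_X(g)$, and the conformal systole defines a function $f : \text{Gr}^+(H^{2k}(X)) \to \R$ by $f(H) = \min\{|w|_H : w \in H^{2k}(X)_\Z \setminus \{0\}\}$, where $|w|_H$ is defined using the projection formula above with respect to $H$.

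Next I would verify that $f$ is continuous. For any fixed $H_0$ and constant $R > f(H_0)$, a compact neighborhood $\overline{U}$ of $H_0$ in $\text{Gr}^+(H^{2k}(X))$ yields uniformly equivalent norms $|\cdot|_H$ for $H \in \overline{U}$, so the set $S = \{w \in H^{2k}(X)_\Z \setminus \{0\} : |w|_H \leq R \text{ for some } H \in \overline{U}\}$ is finite. On a possibly smaller neighborhood of $H_0$, $f$ agrees with the minimum of the continuous functions $H \mapsto |w|_H$ over the finite set $S$, hence is continuous. The main technical point to keep in mind is this local finiteness; without it, $f$ would only be upper semi-continuous and the density argument would fail.

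Finally, by Theorem \ref{thm:main} (and its $4k$-dimensional analogue indicated in Remark \ref{rem:1}), the image of $\Pi_X$ is dense in $\text{Gr}^+(H^{2k}(X))$, so continuity of $f$ gives
\[ CS(X) \;=\; \sup_{g} f(\Pi_X(g)) \;=\; \sup_{H \in \text{Gr}^+(H^{2k}(X))} f(H). \]
The right-hand side is manifestly determined by the cup product pairing on $H^{2k}(X;\R)$ together with the lattice $H^{2k}(X)_\Z$, since both the Grassmannian $\text{Gr}^+$ and the projection formula defining $|w|_H$ are formulated purely from this data. This gives the corollary.
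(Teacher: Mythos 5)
Your proposal is correct and follows exactly the route the paper intends: the paper itself only sketches this corollary, noting that $\mathrm{conf}_{2k}(X,g)$ factors continuously through the period map and then invoking the density statement of Theorem \ref{thm:main} (and Remark \ref{rem:1}), which is precisely your factorization--continuity--density argument with the details (the projection formula $|w|_H^2=(w^+\cup w^+)-(w^-\cup w^-)$ and the local finiteness giving continuity of the lattice minimum) filled in. No gaps; this matches the paper's approach.
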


\vspace{.4cm}

\noindent \textbf{Outline\;\;} The proof of Theorem \ref{thm:main} is given in Section \ref{sec:background}. This uses Proposition \ref{prop:main}, which describes the behavior of the period map in the limit of cylindrically stretching along a hypersurface. The proof of this proposition is given in Appendix \ref{appendix}. The period map for metric families of polytopes is studied in Section \ref{sec:families}, where Theorem \ref{thm:main2} is proved. \\

\noindent \textbf{Acknowledgements\;\;} The author thanks Nikolai Saveliev for numerous helpful comments. The author was supported by NSF Grant DMS-1952762.\\

\vspace{.4cm}

\section{Decomposing cohomology}\label{sec:background}
We first review some standard material on the intersection pairing, over $\R$, of a $4$-manifold decomposed along a separating $3$-manifold. After stating a precise version of \eqref{eq:firstlimit} in Proposition \ref{prop:main}, we give the proof of Theorem \ref{thm:main}. All manifolds in this paper are oriented and smooth.\\

For a manifold $X$ with boundary, following \cite{aps-i} we write
\[
    \widehat{H}^i(X) = \text{im}\left(H^i(X,\partial X)\to H^i(X)\right), 
\]
which is equivalently the image of $H_c^i(X)\to H^i(X)$, where $H_c^i(X)$ is de Rham cohomology with compact supports on the {\emph{interior of $X$}}. This last bit of notation is non-standard: what we write as $H^i_c(X)$ would usually be written as $H^i_c(\text{int}(X))$. We also write $\widehat{b}_i(X) = \dim \widehat{H}^i(X)$. \\

Let $X$ be a closed $4$-manifold, and $Y\subset X$ a closed $3$-manifold separating $X$ into two pieces,
\begin{equation}\label{eq:xdecomp}
X = X_1\cup_Y X_2,
\end{equation}
where $X_1,X_2$ are compact with $\partial X_1= Y = -\partial X_2$. We do not assume that any of these manifolds are connected. Consider the Mayer--Vietoris sequence
\begin{equation}
\begin{tikzcd}[]\label{eq:mv}
    \cdots \;\; H^1(Y) \arrow[r, "\delta"] & H^2(X) \arrow[r, "j"] & H^2(X_1)\oplus H^2(X_2) \arrow[r, "k"] & H^2(Y)\;\; \cdots
\end{tikzcd}
\end{equation}
The map $\delta$ has the following description in de Rham cohomology. Let $[\alpha]\in H^1(Y)$, and choose a neighborhood $Y\times (-1,1)\subset X$. Let $\rho\in C^\infty(\R)$ have $\int \rho(t)dt=1$ and $\text{supp}(\rho)\subset(-1,1)$. Then
\begin{equation}\label{eq:deltamap}
    \delta[\alpha] = [ \rho(t)\alpha\wedge dt], 
\end{equation}
where the form $\rho(t)\alpha\wedge dt$ is defined on all of $X$ via extension by zero. Furthermore,
\begin{equation}\label{eq:dualrel}
    \int_X  \omega \wedge \delta(\alpha) = \int_Y \iota^\ast(\omega)\wedge \alpha
\end{equation}
for all closed forms $\omega\in \Omega^2(X)$, where $\iota:Y\to X$ is the inclusion map.

\vspace{.3cm}

\begin{lemma}\label{lemma:mv1}
    $b_2(X) = \widehat{b}_2(X_1)+ \widehat{b}_2(X_2) + 2\dim ({\rm{im}}(\delta))$.
\end{lemma}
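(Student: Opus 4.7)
The plan is to combine the Mayer--Vietoris exact sequence \eqref{eq:mv} with the duality \eqref{eq:dualrel} and Poincar\'{e} duality on $Y$, reducing the lemma to a single identity that expresses a hidden pairing. First, by exactness of \eqref{eq:mv} at $H^2(X)$,
\[
b_2(X) \;=\; \dim\text{im}(\delta) \;+\; \dim \ker(k),
\]
so it will suffice to show $\dim \ker(k) = \widehat{b}_2(X_1) + \widehat{b}_2(X_2) + \dim\text{im}(\delta)$.

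Next I would decompose $\ker(k)$. Writing $\iota_i \colon Y \hookrightarrow X_i$ for the inclusion, the long exact sequence of the pair $(X_i, Y)$ identifies $\widehat{H}^2(X_i)$ with $\ker(\iota_i^\ast)$. Since $k(\omega_1,\omega_2) = \iota_1^\ast \omega_1 - \iota_2^\ast \omega_2$, the assignment sending $(\omega_1,\omega_2) \in \ker(k)$ to its common restriction $\iota_1^\ast \omega_1 = \iota_2^\ast \omega_2$ fits into a short exact sequence
\[
0 \to \widehat{H}^2(X_1) \oplus \widehat{H}^2(X_2) \to \ker(k) \to W \to 0,
\]
where $W := \text{im}(\iota_1^\ast) \cap \text{im}(\iota_2^\ast) \subset H^2(Y)$. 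This reduces the lemma to the identity $\dim W = \dim\text{im}(\delta)$.

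The crux — the step I expect to be the main obstacle — is to verify this last identity by exhibiting a perfect pairing $W \otimes \text{im}(\delta) \to \R$. Using exactness of \eqref{eq:mv} once more, every $\sigma \in W$ arises as the restriction $\iota^\ast \omega$ of some $\omega \in H^2(X)$, where $\iota \colon Y \hookrightarrow X$. I would define the pairing by $(\iota^\ast \omega,\, \delta \alpha) \mapsto \int_Y \iota^\ast \omega \wedge \alpha$. Formula \eqref{eq:dualrel} rewrites this integral as $\int_X \omega \wedge \delta \alpha$, which makes the pairing manifestly well-defined in both variables (e.g.\ if $\delta\alpha = \delta\alpha'$, then $\int_X \omega \wedge \delta(\alpha-\alpha') = 0$) and nondegenerate on the $\text{im}(\delta)$ side, using nondegeneracy of the intersection form on $X$. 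Nondegeneracy on the $W$ side follows from Poincar\'{e} duality on $Y$: given nonzero $\sigma \in W$, choose $\alpha \in H^1(Y)$ with $\int_Y \sigma \wedge \alpha \ne 0$, and then any lift $\omega$ of $\sigma$ witnesses the nonvanishing of the pairing. Assembling the three steps yields the claimed formula.
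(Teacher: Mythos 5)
Your proof is correct and is essentially the paper's argument: both rest on exactness of the Mayer--Vietoris sequence, the identification $\widehat{H}^2(X_i)=\ker(\iota_i^\ast)$ coming from the pair $(X_i,Y)$, and the key identity $\dim\bigl(\mathrm{im}(\iota_1^\ast)\cap\mathrm{im}(\iota_2^\ast)\bigr)=\dim\mathrm{im}(\delta)$, proved via \eqref{eq:dualrel} together with Poincar\'{e} duality on $X$ and on $Y$. The only difference is bookkeeping: you count $\ker(k)$ directly through a short exact sequence and phrase the crucial step as a perfect pairing $W\otimes\mathrm{im}(\delta)\to\R$, while the paper counts images ($|\mathrm{im}(k)|$, $|\mathrm{im}(\iota_i^\ast)|$) and packages the same duality as the induced isomorphism $H^1(Y)/\ker(\delta)\cong\mathrm{im}(\iota^\ast)^\ast$.
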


\vspace{.07cm}

\begin{proof}
    For brevity, write $|V|=\dim V$. Exactness of \eqref{eq:mv} gives 
    \begin{equation}\label{eq:mvproof1}
        b_2(X) = b_2(X_1)+b_2(X_2) + |\text{im}(\delta)| - |\text{im}(k)|.
    \end{equation}
    Write $\iota_i:Y\to X_i$ for the inclusion maps. Then $\text{im}(k) =\text{im}(\iota^\ast_1)+ \text{im}(\iota^\ast_2)$. Further,
    \begin{equation}\label{eq:mvproof2}
        |\text{im}(k)| + |\text{im}(\iota^\ast)| = |\text{im}(\iota_1^\ast)| + |\text{im}(\iota_2^\ast)|,
    \end{equation}
    as follows from $\text{im}(\iota^\ast_1)\cap \text{im}(\iota^\ast_2)=\text{im}(\iota^\ast)$. We also have, for $i=1,2$,
    \begin{equation}\label{eq:mvproof3}
        |\text{im}(\iota_i^\ast)| = b_2(X_i) - \widehat{b}_2(X_i),
    \end{equation}
    which follows from the long exact sequence of the pair $(X_i,Y)$. Finally, we claim
     \begin{equation}
    |\text{im}(\iota^\ast)|=|\text{im}(\delta)|. \label{eq:mvproof4}
    \end{equation}
    Let $l:H^1(Y)\to \text{im}(\iota^\ast)^\ast$ send $[\alpha]$ to the linear form $l_\alpha(\beta) = \int_Y \alpha \wedge \beta$, restricted to $[\beta]\in\text{im}(\iota)$. This map is surjective, by Poincar\'{e} duality. By \eqref{eq:dualrel}, $l_\alpha=0$ if and only if $\int_X \delta(\alpha)\wedge \omega=0$ for all $[\omega]\in H^2(X)$, which by Poincar\'{e} duality happens if and only if $\delta[\alpha]=0$. Thus $l$ induces $H^1(Y)/\text{ker}(\delta)\cong \text{im}(\iota^\ast)^\ast$, implying \eqref{eq:mvproof4}. The lemma now follows from \eqref{eq:mvproof1}--\eqref{eq:mvproof4}.
\end{proof}

\vspace{.2cm}

Next, $H_c^2(X_i)$ has its own pairing, again induced by integration and wedge product. This pairing is, in general, degenerate. The induced pairing on $\widehat{H}^2(X_i)$ is non-degenerate, by Poincar\'{e} duality. Write $b^\pm(X_i)$ for the dimensions of maximal $\pm$-subspaces in $\widehat{H}^2(X_i)$.

\vspace{.2cm}

\begin{lemma}\label{lemma:mv2}
    $b^\pm(X) = b^\pm(X_1)+ b^\pm(X_2) + \dim ({\rm{im}}(\delta))$.
\end{lemma}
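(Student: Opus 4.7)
The plan is to apply the standard linear-algebra principle that for a non-degenerate symmetric pairing on a space $V$ with an isotropic subspace $L\subset V$, the signature of $V$ equals the signature of $L^\perp/L$ plus $(\dim L, \dim L)$ from a hyperbolic complement. I will take $L = \text{im}(\delta)\subset H^2(X)$ and identify $L^\perp/L$ with $\widehat{H}^2(X_1)\oplus \widehat{H}^2(X_2)$ equipped with its natural pairing.

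First I would show that $L=\text{im}(\delta)$ is isotropic. For $[\alpha],[\beta]\in H^1(Y)$, the identity \eqref{eq:dualrel} with $\omega=\delta(\alpha)$ gives $\int_X \delta(\alpha)\wedge \delta(\beta) = \int_Y \iota^\ast(\delta(\beta))\wedge \alpha$, and this vanishes because $\iota^\ast\circ \delta =0$ by exactness of \eqref{eq:mv} applied after restricting to $Y$ (equivalently, $\delta$ lands in $\ker j$, and both inclusions factor through $j$). Next I would compute $L^\perp$ using \eqref{eq:dualrel} again: $[\omega]\in L^\perp$ iff $\int_Y \iota^\ast(\omega)\wedge \alpha=0$ for all $\alpha$, which by Poincar\'e duality on $Y$ is equivalent to $\iota^\ast(\omega)=0$ in $H^2(Y)$. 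By the long exact sequences of the pairs $(X_i,Y)$, this is equivalent to $j([\omega])\in \widehat{H}^2(X_1)\oplus \widehat{H}^2(X_2)$. Therefore $L^\perp = j^{-1}\bigl(\widehat{H}^2(X_1)\oplus \widehat{H}^2(X_2)\bigr)$, and since $\ker j=L$, the map $j$ induces an isomorphism $L^\perp/L \cong \widehat{H}^2(X_1)\oplus \widehat{H}^2(X_2)$.

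Then I would verify that the induced pairing on $L^\perp/L$ agrees with the orthogonal sum of the pairings on $\widehat{H}^2(X_i)$. Given $[\omega]\in L^\perp$, I would choose a representative of the form $\omega_1+\omega_2+\delta(\alpha)$ where each $\omega_i$ has compact support in $\text{int}(X_i)$ and represents $j([\omega])_i\in \widehat{H}^2(X_i)$; this is possible by the definition of $\widehat{H}^2(X_i)$. For two such representatives, the cross terms $\int_X \omega_i\wedge \eta_j$ with $i\ne j$ vanish by disjoint support, and any term involving $\delta(\alpha)$ paired with a compactly supported form vanishes via \eqref{eq:dualrel} since $\iota^\ast(\omega_i)=0$. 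This leaves $\int_{X_1}\omega_1\wedge\eta_1 + \int_{X_2}\omega_2\wedge\eta_2$, which is the orthogonal sum pairing on $\widehat{H}^2(X_1)\oplus \widehat{H}^2(X_2)$.

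Finally, since this pairing has signature $(b^+(X_1)+b^+(X_2),\; b^-(X_1)+b^-(X_2))$ and the full pairing on $H^2(X)$ differs from that of $L^\perp/L$ by a hyperbolic summand of rank $2\dim L$ (signature $(\dim L,\dim L)$), the result $b^\pm(X)=b^\pm(X_1)+b^\pm(X_2)+\dim(\text{im}(\delta))$ follows. The main obstacle is the middle step: being careful that the identification $L^\perp/L\cong \widehat{H}^2(X_1)\oplus\widehat{H}^2(X_2)$ is actually a pairing-preserving isomorphism, which requires the explicit compactly-supported representatives described above rather than abstract diagram chasing.
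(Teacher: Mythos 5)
Your proof is correct, and it takes a genuinely different (dual) organization from the paper's. The paper argues from the inside out: it chooses lifts $H_i\subset H^2_c(X_i)$ of $\widehat{H}^2(X_i)$, checks that $H_1\oplus H_2\oplus \text{im}(\delta)$ sits orthogonally inside $H^2(X)$ (disjoint supports, \eqref{eq:dualrel}, and the fact that two forms proportional to $dt$ wedge to zero by \eqref{eq:deltamap}), and then uses the dimension count of Lemma \ref{lemma:mv1} to produce a complement $W$ on which, together with $\text{im}(\delta)$, the form is a sum of hyperbolic planes. You argue from the outside in, by isotropic reduction: $L=\text{im}(\delta)$ is isotropic, $L^\perp=j^{-1}\bigl(\widehat{H}^2(X_1)\oplus\widehat{H}^2(X_2)\bigr)$ by \eqref{eq:dualrel} and Poincar\'{e} duality on $Y$, the quotient $L^\perp/L$ is isometric to $\widehat{H}^2(X_1)\oplus\widehat{H}^2(X_2)$, and the Witt-type fact $b^\pm(V)=b^\pm(L^\perp/L)+\dim L$ for an isotropic $L$ in a nondegenerate space finishes the argument. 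Your route buys independence from Lemma \ref{lemma:mv1} (indeed your isomorphism re-derives it, since $b_2(X)=\dim(L^\perp/L)+2\dim L$), at the cost of having to verify that a canonical identification is pairing-preserving, which you correctly identify as the delicate step and handle with the same compactly supported representatives that the paper uses for its orthogonality claims. Two small points should be made explicit: surjectivity of $j|_{L^\perp}$ onto $\widehat{H}^2(X_1)\oplus\widehat{H}^2(X_2)$ does not follow formally from $L^\perp=j^{-1}(\cdots)$ and $\ker j=L$; it needs the one-line observation that $\widehat{H}^2(X_i)=\ker \iota_i^\ast$ (long exact sequence of $(X_i,Y)$), so $k$ vanishes on $\widehat{H}^2(X_1)\oplus\widehat{H}^2(X_2)$ and exactness of \eqref{eq:mv} gives the lift. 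Also, in the isotropy step you mean to take $\omega=\delta(\beta)$ in \eqref{eq:dualrel} (a label slip only); alternatively isotropy is immediate from the explicit representative \eqref{eq:deltamap}.
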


\vspace{.0cm}

\begin{proof} Let $H_i\subset H^2_c(X_i)$ be a subspace which maps isomorphically to $\widehat{H}^2(X_i)$. In particular, the pairing on $H_i$ is non-degenerate. We have a natural injection $H_1\oplus H_2\oplus \text{im}(\delta)\to H^2(X)$, which we treat as an inclusion, and the pairing is orthogonal with respect to the decomposition. Indeed, forms in $H_1$ have support disjoint from those in $H_2$, and both pair trivially with $\text{im}(\delta)$ by \eqref{eq:dualrel}. From \eqref{eq:deltamap}, the pairing is trivial on $\text{im}(\delta)$. Diagonalize the form so that on this subspace it is
\[
    \underbrace{\langle +1 \rangle^{b^+(X_1)} \oplus \langle -1\rangle^{b^-(X_1)}}_{\text{ on  }\;H_1} \oplus \underbrace{\langle +1 \rangle^{b^+(X_2)} \oplus \langle -1\rangle^{b^-(X_2)}}_{\text{ on  }\;H_2} \oplus  \underbrace{\langle 0 \rangle^{\dim(\text{im}(\delta))} }_{\text{ on  }\;\text{im}(\delta)}
\]
By basic algebra of non-degenerate symmetric bilinear forms over $\R$ and Lemma \ref{lemma:mv1}, there exists $W\subset H^2(X)$ such that $H^2(X)=H_1\oplus H_2\oplus \text{im}(\delta)\oplus W$, the complement of $W$ with respect to the pairing is $W^\perp = H_1\oplus H_2 \oplus W$, and on $\text{im}(\delta)\oplus W$ it is equivalent to a sum of hyperbolic planes,
\[
     \left(\begin{array}{cc} 0 & 1 \\ 1 & 0\end{array}\right).
\]
Each hyperbolic plane is equivalent over $\R$ to $\langle +1\rangle\oplus \langle -1\rangle$. The result follows.
\end{proof}

\vspace{.2cm}

\noindent A corollary of Lemma \ref{lemma:mv2} is the well-known additivity of the signature for $4$-manifolds.\\

As above, let $X_1$ and $X_2$ be compact $4$-manifolds with $\partial X_1 = Y = -\partial X_2$. For $r\geq 0$, let
\begin{equation}\label{eq:maindecomp}
    X(r) = X_1\cup Y\times [-r,r] \cup X_2
\end{equation}
where $Y\times \{-r\}$ and $Y\times \{r\}$ are glued to the boundaries of $X_1$ and $X_2$, respectively. Set $X=X(1)$. (This slight deviation from above is to match our convention in the appendix.) Define a metric $h(r)$ on $X(r)$ by $h(r)|_{Y\times [-r,r]}=g_Y + dt^2$ for some fixed metric $g_Y$ on $Y$, and  $h(r)|_{X_i}=g_i$ for some fixed metrics $g_i$ on $X_i$ which agree with $g_Y+dt^2$ in collar neighborhoods of their boundaries. There are diffeomorphisms $f_r:X\to X(r)$ which are natural up to isotopy. We say that the 1-parameter family of metrics $g(r) = f_r^\ast h(r)$ on $X$ stretches along $Y$ in a cylindrical fashion.\\

Let $X_1(\infty)=X_1\cup Y \times [0,\infty)$ where $Y\times \{0\}$ is glued to the boundary of $X_1$. This comes with a metric $g_1(\infty)$, equal to $g_1$ on $X_1$ and $g_Y + dt^2$ on the cylinder. Let $\mathcal{H}^2_{X_1}$ denote the $L^2$ harmonic $2$-forms on $X_1(\infty)$, where the $L^2$ metric is defined by $g_1(\infty)$. There is a natural identification
\[
    \mathcal{H}^2_{X_1} = \widehat{H}(X_1),
\]
due to Atiyah--Patodi--Singer \cite[Prop. 4.9]{aps-i}. The self-dual $L^2$ harmonic $2$-forms $\mathcal{H}^+_{X_1}$ give a maximal positive subspace for the pairing on $\widehat{H}(X_1)$. Similar remarks hold for $X_2$.\\

Recall that ${\rm{Gr}}^+(H^2(X))$ is an open subset of the Grassmannian of $b^+(X)$-dimensional planes in $H^2(X)$. We write $\overline{{\rm{Gr}}}^+(H^2(X))$ for its closure in this ambient Grassmannian. Equivalently, this is the space of maximal semi-positive subspaces in $H^2(X)$.\\

\begin{prop}\label{prop:main}
    Let $g(r)$ be a 1-parameter family of metrics which as $r\to \infty$ stretches along a $3$-manifold $Y\subset X$ in a cylindrical fashion as described above. Then, in $\overline{{\rm{Gr}}}^+(H^2(X))$, we have
    \begin{equation}\label{eq:propmain}
        \lim_{r\to \infty} \mathcal{H}^+_{g(r)} = j^{-1}\left(\mathcal{H}^+_{X_1} \oplus   \mathcal{H}^+_{X_2} \right)
    \end{equation}
    where $j:H^2(X)\to H^2(X_1)\oplus H^2(X_2)$ is the map in the Mayer--Vietoris sequence.\\
\end{prop}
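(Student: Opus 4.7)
The plan is to combine a dimension count with a standard neck-stretching gluing argument. Write $V := j^{-1}(\mathcal{H}^+_{X_1} \oplus \mathcal{H}^+_{X_2})$ for the proposed limit.

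The first step is to check that $V$ lies in $\overline{\text{Gr}}^+(H^2(X))$ and has the correct dimension. Since classes in $\widehat{H}^2(X_i)$ restrict trivially to $Y$, the subspace $\widehat{H}^2(X_1) \oplus \widehat{H}^2(X_2)$ is contained in $\ker(k) = \text{im}(j)$ by exactness in \eqref{eq:mv}, so
\[
\dim V \;=\; \dim \mathcal{H}^+_{X_1} + \dim \mathcal{H}^+_{X_2} + \dim \ker(j) \;=\; b^+(X_1) + b^+(X_2) + \dim \text{im}(\delta) \;=\; b^+(X)
\]
by Lemma \ref{lemma:mv2}. The orthogonality analysis in the proof of Lemma \ref{lemma:mv2} applied to compactly supported lifts of $\widehat{H}^2(X_i)$ shows that the intersection form on $V$ is positive on the two $\mathcal{H}^+_{X_i}$ pieces, vanishes on $\text{im}(\delta)$, and pairs trivially between these summands; hence $V$ is a maximal semi-positive subspace.

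Since both $V$ and $\mathcal{H}^+_{g(r)}$ have dimension $b^+(X)$ and the ambient Grassmannian is compact, it is enough to produce, for each vector in a spanning set of $V$, a family $[\sigma_r] \in \mathcal{H}^+_{g(r)}$ converging to it as $r \to \infty$; matching dimensions will then force $\mathcal{H}^+_{g(r)} \to V$. The constructions split by summand. For each $\omega^+_i \in \mathcal{H}^+_{X_i}$, the $L^2$ condition forces the asymptotic limit on the cylindrical end of $X_i(\infty)$ to vanish, giving exponential decay; truncating $\omega^+_i$ inside the neck $Y \times [-r, r]$ and adding a small exact correction yields a closed self-dual 2-form $\tilde\omega^+_{i,r}$ on $X(r)$ which differs from being $g(r)$-harmonic by exponentially small error, and whose cohomology class lies in $V$ by construction (its image under $j$ places $[\omega^+_i]$ in the $i$-th summand and $0$ in the other). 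For each $\alpha \in \mathcal{H}^1(Y)$, take the representative $\chi_r(t)\, \alpha \wedge dt$ of $\delta[\alpha]$ from \eqref{eq:deltamap}, where $\chi_r$ is a bump supported strictly inside $(-r, r)$ of integral one, uniformly spread across the neck; its $L^2$ norm, and hence its anti-self-dual part, has order $r^{-1/2}$. Orthogonal projection of these approximate self-dual forms onto the genuine $g(r)$-harmonic self-dual forms yields the desired $[\sigma_r]$, and the resulting classes converge in the Grassmannian topology to the corresponding elements of $V$.

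The main technical obstacle is controlling this harmonic projection. On a long neck, the Laplacian on 2-forms over $X(r)$ develops small eigenvalues of order $1/r^2$ coming from translation-invariant modes of the form $\alpha \wedge dt + *_Y \alpha$ with $\alpha \in \mathcal{H}^1(Y)$, so the spectral gap between these and the genuine harmonic forms collapses as $r \to \infty$. One needs quantitative spectral estimates to ensure that the orthogonal projection onto $g(r)$-harmonic forms preserves the leading-order cohomology class rather than leaking into these nearby spectral subspaces. This kind of spectral control is the standard analytic content of neck-stretching arguments originating in \cite{aps-i}, and constitutes the work that would naturally be carried out in the appendix.
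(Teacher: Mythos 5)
Your setup is sound, and part of it is a genuine (and legitimate) shortcut relative to the paper: the verification that $V=j^{-1}(\mathcal{H}^+_{X_1}\oplus\mathcal{H}^+_{X_2})$ is maximal semi-positive of dimension $b^+(X)$ is correct, and reducing the proposition to "exhibit classes in $\mathcal{H}^+_{g(r)}$ converging to a spanning set of $V$, then conclude by dimension matching" lets you avoid proving surjectivity of a gluing map, where the paper instead imports the isomorphism $\mathbf{P}_r\circ\Phi_r:\mathcal{H}^+_{X_1}\oplus\mathcal{V}_Y\oplus\mathcal{H}^+_{X_2}\to\mathcal{H}^+_{X(r)}$ from \cite{clmi}. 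Your treatment of the $\mathcal{H}^+_{X_i}$ summands is essentially the paper's Step 2: $L^2$ harmonic forms on a cylindrical end decay exponentially, so the cut-off form has exponentially small relative error and the class of its projection is controlled modulo $\mathrm{im}(\delta)$. (One wording problem: a closed self-dual form is automatically harmonic, so "closed self-dual form differing from harmonic by exponentially small error" should be "self-dual and approximately closed"; this is cosmetic.)

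The genuine gap is in the $\mathrm{im}(\delta)$ classes. The form $\chi_r(t)\,\alpha\wedge dt$ is closed but is not approximately self-dual in any useful sense: its anti-self-dual part is $\tfrac{\chi_r}{2}(\alpha\wedge dt-\star_Y\alpha)$, of the same order $r^{-1/2}$ as the form itself, i.e.\ a relative error of size one, while the relevant neck eigenvalues are of size $1/r^2$. So nothing in your argument controls the cohomology class of its $L^2$ projection to $\mathcal{H}^+_{g(r)}$, and this is not a routine spectral deferral to be cited from \cite{aps-i}: the correct approximate representative is the translation-invariant self-dual form $\omega_\alpha=\alpha\wedge dt+\star_Y\alpha$ on the neck, interpolated on $X_1$ and $X_2$ with \emph{extended} $L^2$ harmonic self-dual forms asymptotic to $\omega_\alpha$. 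Such extensions exist on both sides exactly when $\alpha\in\mathcal{V}_Y$, and the identification $\mathcal{V}_Y\cong\mathrm{im}(\delta)$ (Lemma B.1 of \cite{clmi}) together with the relative exponential estimate \eqref{eq:expdecay} is what makes the projected class converge to $\delta[\alpha]$ after the $1/2r$ normalization. Without matching $\star_Y\alpha$ into the two sides, the projection can leak into the nearby small-eigenvalue modes by an amount comparable to the form itself, and there is no reason the limits of your neck classes exist or span $\mathrm{im}(\delta)$ (they could degenerate or land in the span of the classes already produced from $\mathcal{H}^+_{X_1}$ and $\mathcal{H}^+_{X_2}$) — which is exactly what your dimension-count step requires. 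So the skeleton is fine, but for the neck directions you must either redo the splicing with extended solutions as in the appendix, or cite the gluing theorem in the form the paper does.
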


\vspace{.1cm}

The proof of Proposition \ref{prop:main} follows from standard gluing theory of harmonic forms. For completeness, we explain in Appendix \ref{appendix} how the result follows from the work of \cite{clmi}.\\

Another way to state \eqref{eq:propmain} is as follows. For $i=1,2$, choose subspaces $H^+_i\subset H_c^2(X_i)$ which map isomorphically to $\mathcal{H}^+_{X_i}\subset \widehat{H}^2(X_i)$. Then, identifying $H^+_i$ with its image in $H^2(X)$, we have
\begin{equation}\label{eq:propeqalt}
\lim_{r\to \infty} \mathcal{H}^+_{g(r)} = H^+_1 + \text{im}(\delta) + H^+_2,
\end{equation}
and this is a direct sum of subspaces. Note that the possible choices $H^+_i\subset H^2(X)$ differ by the addition of elements in $\text{im}(\delta)$, so that \eqref{eq:propeqalt} is independent of the choices, as expected.\\

Proposition \ref{prop:main} is the main tool used to prove Theorem \ref{thm:main}. A simple case is when $\text{im}(\delta)=0$, so that $j:H^2(X)\to H^2(X_1)\oplus H^2(X_2)$ is an isomorphism, and \eqref{eq:propmain} may be written as
\[
        \lim_{r\to \infty} \mathcal{H}^+_{g(r)} =\mathcal{H}^+_{X_1} \oplus   \mathcal{H}^+_{X_2}.
\]
Another simplification occurs when $X_1$ and $X_2$ are definite, for in this case the right side of \eqref{eq:propmain} is independent of metrics. Both of these simplifications are present when we apply this proposition to prove Theorem \ref{thm:main}. Before doing so, we illustrate Proposition \ref{prop:main} with two simple examples.\\

\begin{example}\label{ex1}{\emph{
Let $X=\mathbb{C}\mathbb{P}^2\# \overline{\mathbb{C}\mathbb{P}}^2$. Let $H\in H^2(X)$ be dual to a hyperplane, and $E\in H^2(X)$ to an exceptional sphere: $H^2=1$, $E^2=-1$, $H\cdot E=0$. Represent $H^2(X)$ as $\R^2$, with $x$-axis $\R\cdot E$, and $y$-axis $\R\cdot H$, see Figure \ref{fig:cp2cp2barintegral}. The positive subspaces are lines with slope $s$ where $|s|>1$ (including $|s|=\infty$). Stretching a metric along the connected sum $3$-sphere, we limit to the $y$-axis.
}}
\end{example}

\vspace{.1cm}

\begin{example}\label{ex2}{\emph{
Let $X=S^2\times S^2$. The duals of $S^2\times \{pt\}$ and $\{pt\}\times S^2$ in $H^2(X)$ give a basis for the $x$- and $y$-axes, see Figure \ref{fig:s2s2integral}. The positive subspaces are lines with positive slope. Decomposing $X$ along $S^1\times S^2$ and $S^2\times S^1$, and stretching in a cylindrical fashion, we limit to the $y$-axis and $x$-axis, respectively. In each case, the axis is the line $\text{im}(\delta)\subset H^2(X)$. 
}} 
\end{example}

\vspace{.1cm}

\begin{figure}
\centering
\begin{minipage}{.5\textwidth}
  \centering
  \begin{tikzpicture}[scale=0.65]
    \coordinate (Origin)   at (0,0);
    \coordinate (XAxisMin) at (-5,0);
    \coordinate (XAxisMax) at (5,0);
    \coordinate (YAxisMin) at (0,-5);
    \coordinate (YAxisMax) at (0,5);
    \draw [thin, gray,-latex] (XAxisMin) -- (XAxisMax);
    \draw [thin, gray,-latex] (YAxisMin) -- (YAxisMax);
    \clip (-5,-5) rectangle (5cm,5cm); 
    \coordinate (Bone) at (0,2);
    \coordinate (Btwo) at (2,-2);
	    \foreach \x in {1,...,17}{
		\draw[thick,gray] (Origin) -- (45+5*\x:6cm) {};
		\draw[thick,gray] (Origin) -- (45+5*\x:-6cm) {};
    }
	\draw [thin, dashed, gray] (Origin) -- (45:6cm);
	\draw [thin, dashed, gray] (Origin) -- (135:6cm);
	\draw [thin, dashed, gray] (Origin) -- (45:-6cm);
	\draw [thin, dashed, gray] (Origin) -- (135:-6cm);
    \foreach \x in {-7,-6,...,5}{
      \foreach \y in {-7,-6,...,5}{
        \node[draw,circle,inner sep=1pt,fill] at (2*\x,2*\y) {};
      }
    }
  \end{tikzpicture}
  \captionof{figure}{$\mathbb{C}\mathbb{P}^2\# \overline{\mathbb{C}\mathbb{P}}^2$}
  \label{fig:cp2cp2barintegral}
\end{minipage}%
\begin{minipage}{.5\textwidth}
  \centering
  \begin{tikzpicture}[scale=0.65]
    \coordinate (Origin)   at (0,0);
    \coordinate (XAxisMin) at (-5,0);
    \coordinate (XAxisMax) at (5,0);
    \coordinate (YAxisMin) at (0,-5);
    \coordinate (YAxisMax) at (0,5);
    \draw [thin, gray,-latex] (XAxisMin) -- (XAxisMax);
    \draw [thin, gray,-latex] (YAxisMin) -- (YAxisMax);
    \clip (-5,-5) rectangle (5cm,5cm); 
    \coordinate (Bone) at (0,2);
    \coordinate (Btwo) at (2,-2);
	    \foreach \x in {1,...,17}{
		\draw[thick,gray] (Origin) -- (5*\x:6cm) {};
		\draw[thick,gray] (Origin) -- (5*\x:-6cm) {};
    }
    \foreach \x in {-7,-6,...,5}{
      \foreach \y in {-7,-6,...,5}{
        \node[draw,circle,inner sep=1pt,fill] at (2*\x,2*\y) {};
      }
    }
  \end{tikzpicture}
  \captionof{figure}{$S^2\times S^2$}
  \label{fig:s2s2integral}
\end{minipage}
\end{figure}

\begin{proof}[Proof of Theorem \ref{thm:main}]
    Let $H\in \text{Gr}^+(H^2(X))$ be spanned by pairwise orthogonal rational classes $\sigma_1,\ldots,\sigma_n\in H^2(X;\Q)$, where $n=b^+(X)$. Such $H$ are dense in $\text{Gr}^+(H^2(X))$. Choose $N\in \Z_{+}$ such that $N\sigma_i\in H^2(X)_\Z$ for all $i$. Let $\Sigma_i\subset X$ be a closed oriented connected surface such that $[\Sigma_i]\in H_2(X;\Z)/\text{torsion}$ is Poincar\'{e} dual to $N\sigma_i$. Here we use that every class in $H_2(X;\Z)$, for a closed $4$-manifold, is represented by an embedded closed surface. Since $\Sigma_i\cdot\Sigma_j=0$ for $i\neq j$, a well-known procedure (by adding handles) allows us to further arrange that $\Sigma_i\cap \Sigma_j = \emptyset$ for $i\neq j$. For background on the facts used here, see e.g. \cite[Ch. 1, 2]{gs}.\\

    Let $W_i\subset X$ be a closed disk-bundle neighborhood of $\Sigma_i$. We may assume $W_i\cap W_j=\emptyset $ for $i\neq j$. Let $Y_i=\partial W_i$, which is a circle bundle over $\Sigma_i$ with non-zero Euler class $e_i\in H^2(\Sigma_i)$. The Gysin exact sequence for the circle bundle $\pi:Y_i\to \Sigma_i$ is given by
    \begin{equation}
\begin{tikzcd}[]\label{eq:gysin}
    \cdots \;\; H^1(Y_i) \arrow[r, "\pi_\ast"] & H^0(\Sigma_i) \arrow[r, "e_i\wedge "] & H^2(\Sigma_i)  \arrow[r, "\pi^\ast"] & H^2(Y_i)\;\; \cdots
\end{tikzcd}
\end{equation}
and thus $\pi^\ast:H^2(Y_i)\to H^2(\Sigma_i)$ is zero. This implies the vanishing of the restriction map
\begin{equation}\label{eq:restmapinproof}
    H^2(W_i)\to H^2(Y_i),
\end{equation}
as $W_i$ deformation retracts onto $\Sigma_i$. Now let $X_2=W_1\cup \cdots \cup W_n$, $Y=\partial X_2 = Y_1\cup \cdots \cup Y_n$, and let $X_1$ be the closure of $X\setminus X_2$. Construct a family of metrics $g(r)$ as in Proposition \ref{prop:main}, stretching along $Y$ in a cylindrical fashion. Note the vanishing of each map in \eqref{eq:restmapinproof} implies that $\iota^\ast_2:H^2(X_2)\to H^2(Y)$ is zero, which in turn implies $\text{im}(\delta)\cong \text{im}(\iota^\ast)$ is zero. Then \eqref{eq:propmain} yields
\[
    \lim_{r\to \infty} \mathcal{H}^+_{g(r)} = \widehat{H}^2(X_2) = \bigoplus_{i=1}^n \R \cdot \sigma_i = H.
\]
We have used that $X_1$ is negative definite and $X_2$ is positive definite, with each $\widehat{H}^2(W_i)=H^2(W_i)$ generated by the Poincar\'{e} dual of $[\Sigma_i]$. This completes the proof.
\end{proof}

\vspace{0.1cm}

\begin{remark}\label{rem:1}{\emph{
For a closed, connected manifold $X$ of dimension $4k$, the period map is defined as a map $\Pi_X:\text{Met}(X)\to \text{Gr}^+(H^{2k}(X))$. The above proof easily adapts to this case. The surfaces $\Sigma_i$ are now $2k$-dimensional. To obtain such surfaces we appeal to a classic result of Thom, which says that for every class $\sigma\in H_i(X;\Z)$, there is some $n\in \Z_+$ such that $n\sigma$ is represented by an embedded submanifold. All other aspects of the proof are straightforward adaptations.
}}
\end{remark}

\vspace{0.051cm}

\begin{remark}\label{rem:2}{\emph{
We may also allow $X$ to have boundary. Consider $\text{Met}_{\text{cyl}}(X)$, the space of metrics on $\text{int}(X)$ which are conformally equivalent near the boundary to cylindrical end product metrics. The period map is then $\Pi_X:\text{Met}_{\text{cyl}}(X)\to \text{Gr}^+(\widehat{H}^{2}(X))$. The proof above adapts to show if $X$ is connected, then $\Pi_X$ has dense image.
}}
\end{remark}

\vspace{0.1cm}

The proof method of Theorem \ref{thm:main} also shows the following. Let $H$ be a rational maximal {\emph{semi}}-positive definite subspace of $H^2(X)$, which is not positive. In particular, we have
\[
    H\in \partial \overline{\text{Gr}}^+(H^2(X)).
\]
We can find surfaces $\Sigma_i$ as in the above proof, but now only $\Sigma_i\cdot \Sigma_i\geq 0$, with equality holding for certain $i$. Constructing the family of metrics $g(r)$ as before still gives $\lim_{r\to \infty}\mathcal{H}^+_{g(r)} = H$.

\vspace{.3cm}

\section{Metric families and hyperbolic polyhedra}\label{sec:families}
In this section we prove Theorem \ref{thm:main2}. As motivation, first consider the case in which $b_2(X)=1$ and $b^+(X)=1$, such as in Examples \ref{ex1} and \ref{ex2}. Proving surjectivity of $\Pi_X$ is straightforward in this case, as the positive Grassmannian is simply an interval. Choose two non-zero vectors $v_1$ and $v_2$ in $H^2(X)_\Z$ which square to zero and whose spans in $H^2(X)$ give the two endpoints of the compactified Grassmannian. Let $\Sigma_1$ and $\Sigma_2$ be surfaces in $X$ Poincar\'{e} dual to $v_1$ and $v_2$, respectively. Form a family of metrics $g(r)$ parametrized by $\R$ which stretches in a cylindrical fashion along the boundary of a regular neighborhood of $\Sigma_1$ (resp. $\Sigma_2$) as $r\to -\infty$ (resp. $+\infty$). An application of Proposition \ref{prop:main} shows that the period map on this family induces a map from an interval to an interval which is bijective on endpoints, and is thus surjective.\\

Consider next the case $b_2(X)=2$ and $b^+(X)=1$. Here the positive Grassmannian may be identified with the hyperbolic plane $\mathbb{H}^2$. In this case we will construct $2$-dimensional families of metrics, parametrized by polygons, where each edge consists of metrics entirely stretched along some $3$-manifold. Proposition \ref{prop:main} gives constraints on where the boundaries of these polygons map to in $\mathbb{H}^2$. We can then show that there is a polygonal metric family which maps onto any given (rational) hyperbolic simplex in $\mathbb{H}^2$, implying the surjectivity of $\Pi_X$. In the general case where $b^+(X)=1$ and $b_2(X)=n+1$, we will show there is an $n$-dimensional polytope (the permutahedron $P_n$) of metrics which surjects onto any given (rational) hyperbolic simplex in $\mathbb{H}^n$. \\

In Subsection \ref{subsec:families}, we review a construction for families of metrics parametrized by polyhedra, following Kronheimer and Mrowka \cite[\S 3.9]{km-unknot} and Bloom \cite{bloom}, and study the behavior of the period map on the faces of these polyhedra. In Subsection \ref{subsec:proofsurjectivity} we prove Theorem \ref{thm:main2}. In Subsection \ref{subsec:examples} we give examples which illustrate the constructions in the setting of the hyperbolic plane $\mathbb{H}^2$.\\

\subsection{Polytopes and the period map}\label{subsec:families}

Let $X$ be a closed and connected $4$-manifold. Suppose we have a collection
\[
    \mathcal{C} = \{ Y_1, Y_2,\ldots, Y_m \}
\]
of closed embedded $3$-manifolds $Y_i\subset X$ that are pairwise disjoint. Each $Y_i$ may be disconnected. To define an $m$-dimensional model family of metrics, where each parameter stretches in a cylindrical fashion along each $Y_i$, first choose disjoint collar neighborhoods $Y_i\times [-1,1] \subset X$, and a metric $g_{\mathbf{0}}$ on $X$ equal to $g_{Y_i}+ds^2$ on each such collar. Consider a family of smooth functions $\psi_r:\R\to \R$, depending smoothly on $r\in [0,\infty]$, such that $\psi_0(s)=1$ for all $s$, $\psi_r(s)=1$ for $|s|\geq 1$, and
\[
    \psi_r(s) = \frac{1+1/r^2}{s^2+1/r^2}
\]
for all $s$ in some neighborhood $U\subset (-1,1)$ of $0$, with $\psi_r$ uniformly bounded on $\R\setminus U$. Note $\psi_\infty(s)=1/s^2$ near $0$. For $\mathbf{r}=(r_1,\ldots,r_m)\in [0,\infty)^m$ define a smooth metric $g_{\mathbf{r}}$ on $X$ by 
\begin{equation}\label{eq:firstmodel}
    g_{\mathbf{r}}|_{Y_i\times [-1,1]} = g_{Y_i} + \psi_{r_i}(s)ds^2
\end{equation}
for each $i=1,\ldots,m$, and $g_{\mathbf{r}}=g_{\mathbf{0}}$ on the complement of the collar neighborhoods. This is similar to the situation described in \eqref{eq:maindecomp}, where $Y:=Y_1\cup\cdots\cup Y_m$, except that we use a different stretching parameter for each $Y_i$, and also, we do not assume $Y$ separates $X$ into $X_1$ and $X_2$.\\

We call the $g_{\mathbf{r}}$ for ${\mathbf{r}}\in [0,\infty]^m$ constructed above {\emph{broken metrics}}. If ${\mathbf{r}}\in [0,\infty)^m$, then $g_{\mathbf{r}}$ is an honest smooth metric on $X$. The broken metric for $\mathbf{r}=(\infty,\ldots,\infty)$, restricted to
\[
    X\setminus Y,
\]
is conformally equivalent to a metric with cylindrical end metrics on $(-1,0)\times Y$ and $(0,1)\times Y$, and we say that this metric is {\emph{broken along $\mathcal{C}$}}. More generally, any $g_\mathbf{r}$ for ${\mathbf{r}}\in [0,\infty]^m$ is broken along some subcollection of $\mathcal{C}$, determined by those $i$ for which $r_i=\infty$.\\

Consider a metric $\mathbf{g}$ broken along $\mathcal{C}=\{Y_1,\ldots,Y_m\}$ as constructed above. We define
\begin{equation*}\label{eq:extofperiodmap}
    \mathcal{H}^+_{\mathbf{g}}(X) := j^{-1}\left(  \mathcal{H}^+_{X\setminus Y} \right)
\end{equation*}
where $j:H^2(X)\to H^2(X\setminus Y)$ is the map induced by inclusion, and 
\[  
    \mathcal{H}^+_{X\setminus Y}\subset \widehat{H}^2(X\setminus Y) = \text{im}\left(H^2_c(X\setminus Y)\to H^2(X\setminus Y)\right)
\]
is the space of self-dual harmonic $2$-forms on $X\setminus Y$ with cylindrical end metric $\mathbf{g}|_{X\setminus Y}$. We may now extend the definition of the period map $\Pi_X$ to this broken metric by setting $\Pi_X(\mathbf{g})=\mathcal{H}^+_{\mathbf{g}}(X)$. With this definition, a minor variation of Proposition \ref{prop:main} implies the following.\\

\begin{prop}\label{prop:extperiodmap}
    Consider a disjoint collection of $m$ hypersurfaces $\mathcal{C}$ in the closed $4$-manifold $X$, and construct an associated model family of broken metrics $g_\mathbf{r}$ as above, where $\mathbf{r}\in [0,\infty]^m$. Then the period map extends to define a continuous map on this family:
    \begin{gather*}
         [0,\infty]^m \longrightarrow \overline{{\rm{Gr}}}^+(H^2(X)),\\
         \mathbf{r} \longmapsto \Pi_X(g_{\mathbf{r}}).
    \end{gather*}
\end{prop}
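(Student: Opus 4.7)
The plan is to extend $\Pi_X$ to the compactified parameter space $[0,\infty]^m$ using the definition stated just before the proposition, and then to establish continuity. For $\mathbf{r} \in [0,\infty]^m$, set $I(\mathbf{r}) = \{ i : r_i = \infty\}$ and $Y_I = \bigcup_{i\in I(\mathbf{r})} Y_i$. Because the $Y_i$ are pairwise disjoint, the restriction of $g_\mathbf{r}$ to $X\setminus Y_I$ is a smooth Riemannian metric that is conformally cylindrical near each component of $Y_I$, while the hypersurfaces $Y_i$ with $i\notin I(\mathbf{r})$ contribute only bounded conformal deformations in the interior. We then set $\mathcal{H}^+_{g_\mathbf{r}}(X) = j^{-1}(\mathcal{H}^+_{X\setminus Y_I})$, which agrees with the usual period map when $\mathbf{r}\in[0,\infty)^m$ and matches the extension prescribed above the proposition in general.

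Continuity on the open locus $[0,\infty)^m$, where $g_\mathbf{r}$ is an honest smooth metric depending smoothly on $\mathbf{r}$, follows from standard elliptic theory: the $L^2$-orthogonal projection onto the space of self-dual harmonic forms depends continuously on the metric, and this descends to a continuous map to $\text{Gr}^+(H^2(X))$.

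For a boundary point $\mathbf{r}_*\in[0,\infty]^m$ with $I = I(\mathbf{r}_*)$ and any sequence $\mathbf{r}_n \to \mathbf{r}_*$, the bounded coordinates $(r_n)_i$ for $i\in I^c$ assemble into a family of reference cylindrical-end metrics on $X\setminus Y_I$ depending continuously on these parameters, while the unbounded coordinates for $i\in I$ produce genuine cylindrical stretching along the components of $Y_I$. The desired identification $\lim_n \mathcal{H}^+_{g_{\mathbf{r}_n}}(X) = j^{-1}(\mathcal{H}^+_{X\setminus Y_I})$ in $\overline{\text{Gr}}^+(H^2(X))$ is a direct generalization of Proposition \ref{prop:main}: the gluing argument of \cite{clmi} recalled in Appendix \ref{appendix} applies essentially verbatim to simultaneous stretching along a disjoint union of hypersurfaces, since the neck analyses and the relevant estimates decouple across components.

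The main obstacle is ensuring that these estimates are uniform as the bounded coordinates vary over a compact set and the unbounded ones diverge at independent rates. Disjointness of the $Y_i$ is the essential structural input: each neck can be analyzed on its own, and the $L^2$-harmonic theory on $X\setminus Y_I$ factors naturally across its ends. A diagonal argument, combined with continuity of $\mathcal{H}^+_{X\setminus Y_I}$ in the reference cylindrical-end metric, then yields continuity of $\mathbf{r}\mapsto \Pi_X(g_\mathbf{r})$ on all of $[0,\infty]^m$.
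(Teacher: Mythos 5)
Your proposal is correct and takes essentially the same route as the paper, which simply records the statement as ``a minor variation of Proposition \ref{prop:main}'' established by the gluing theory of \cite{clmi} in Appendix \ref{appendix}; your extension $j^{-1}\bigl(\mathcal{H}^+_{X\setminus Y_I}\bigr)$ at boundary points is exactly the paper's definition of the period map on broken metrics, and the boundary continuity is obtained the same way, by running the splicing/projection estimates with independent stretching parameters along the disjoint necks. If anything, you supply more detail (uniformity in the bounded coordinates, decoupling across components) than the paper's one-line justification.
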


\vspace{.3cm}

Next, consider an $n$-dimensional convex polytope $P$. We require that to each codimension $m$ face $F$ on the boundary of $P$ is associated  $\mathcal{C}_F$, a collection of $m$ pairwise disjoint hypersurfaces. The intersection of two faces $F$ and $F'$ is another face, and we require
\[
    \mathcal{C}_{F\cap F'} \subset  \mathcal{C}_F \cup \mathcal{C}_{F'}.
\]
We say that $P$ is {\emph{labelled by hypersurfaces in $X$}} if these conditions are satisfied. We construct
\[
    G_P = \{ g_p \}_{p\in P},
\]
a family of broken metrics parametrized by $P$, as follows. If $\dim P =0$, then $G_P$ is any smooth metric. Suppose $\dim < n$ families are constructed, and $\dim P=n$. A point $p$ on the interior of a proper face of $P$ has a neighborhood of the form $(0,\infty]^{m}\times P'$ where $P'$ is a convex polytope and $m>0$. To describe a neighborhood of $g_p$ in $G_P$, we proceed as in \eqref{eq:firstmodel}, but in place of a fixed $g_{\mathbf{0}}$, we apply the construction to the entire family $G_{P'}$. This gives a family of broken metrics parametrized by $[0,\infty]^{n-m}\times P'$, and is our model for a neighborhood of $g_p\in G_P$. Using convexity properties of metrics we can construct a family parametrized by a collar neighborhood of $\partial P\subset P$ satisfying these model conditions, then extend this to all of $P$, defining $G_P$.\\

The family of metrics $G_P$ parametrized by the polytope $P$ gives a map
\[
     P \longrightarrow \overline{{\rm{Gr}}}^+(H^2(X))
\]
by sending $p\in P$ to $\Pi_X(g_p)$, and this map is continuous by Proposition \ref{prop:extperiodmap}. Furthermore, as $g_p$ for $p\in\text{int}(P)$ is a smooth (unbroken) metric, the image of $\text{int}(P)$ under $\Pi_{X,P}$ is contained in the image of the period map $\Pi_X$ from \eqref{eq:periodmap}.\\

\begin{figure}[t]
\centering
\includegraphics[scale=0.9]{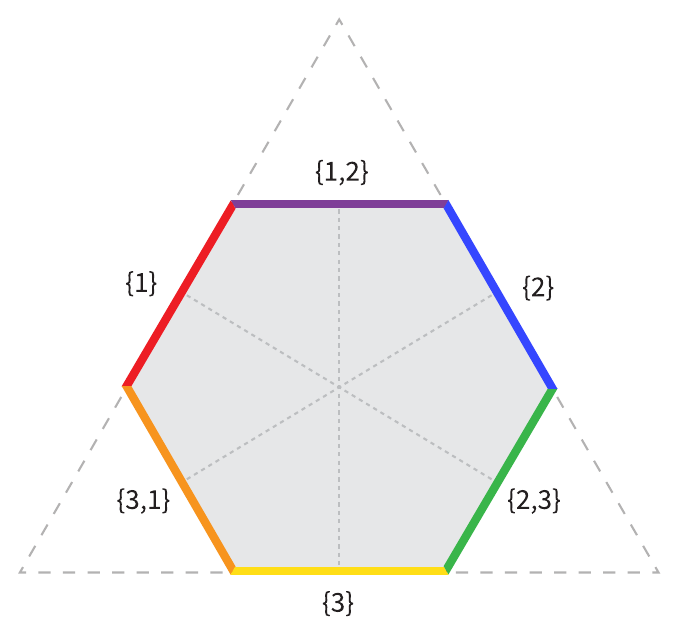}
\caption{\small{To three surfaces $\Sigma_1,\Sigma_2,\Sigma_3\subset X$ is associated a metric family parametrized by the permutahedron $P_2$, which is a hexagon. Each face is labelled by a non-empty proper subset of $\{1,2,3\}$ (a nested sequence with one subset). Faces $\{i\}$ and $\{i,j\}$ correspond to metrics broken along boundaries of regular neighborhoods of $\Sigma_i$ and $\Sigma_i\cup \Sigma_j$, respectively. The vertex joining $\{i\}$ and $\{i,j\}$ is the nested sequence $\{i\}\subset \{i,j\}$. The hexagon is made up of six wedges, each a model family of stretching metrics $[0,\infty]^2$. The permutahedron $P_n$ can be viewed as a truncation of the simplex $\Delta_n$, as indicated here.}}\label{fig:hexagon}
\end{figure}

Now suppose we are given a collection of closed, embedded surfaces $\mathcal{S}=\{\Sigma_1,\ldots,\Sigma_{n+1}\}$ in $X$. We do not assume that the surfaces are pairwise disjoint. For each $I\subset \{1,\ldots,n+1\}$ let $Y_I\subset X$ be the $3$-manifold which is the boundary of a regular neighborhood $X_I$ of $\cup_{i\in I} \Sigma_i$. We make these choices so that $I\subset J$ implies $Y_I\cap Y_J = \emptyset$. There is a polytope, the $n$-dimensional permutahedron $P_{n}$, whose proper faces are labelled by nested sequences $\mathbf{I}=\{I_i\}_{i=1}^l$ ($1\leq l\leq n$) of the form
\begin{equation}\label{eq:nestedseq}
    \emptyset\subsetneq  I_1\subsetneq I_2 \subsetneq \cdots \subsetneq  I_l \subsetneq \{1,\ldots,n+1\}.
\end{equation}
Such nested sequences form a poset by declaring $\mathbf{I}\leq \mathbf{I}'$ if and only if $\mathbf{I}'\subset \mathbf{I}$. Then the poset of proper non-empty faces of $P_n$ is isomorphic to the poset of such nested sequences. The face $F_\mathbf{I}\subset P_{n}$ corresponding to $\mathbf{I}=\{I_i\}_{i=1}^l$ has codimension $l$ in $P_n$. Now $P_n$ can be viewed as a polytope with faces labelled by hypersurfaces in $X$: the face $F_\mathbf{I}$ has the associated collection
\[
    \mathcal{C}_{\mathbf{I}} = \{ Y_{I} \; \mid \; I\in \mathbf{I} \}
\]  
which by construction is a disjoint set of closed embedded $3$-manifolds in $X$. Applying the metric family construction, to any collection of surfaces $\mathcal{S}=\{\Sigma_1,\ldots,\Sigma_{n+1}\}$ in $X$ we obtain family of metrics on $X$ parametrized by the $n$-dimensional permutahedron $P_n$. See for example Figure \ref{fig:hexagon}. We remark that for a nested sequence as in \eqref{eq:nestedseq}, by convention we define $I_{l+1}=\{1,\ldots,n+1\}$. These permutahedral families of metrics fit into a more general framework of metric families parametrized by graph associahedra, explored by Bloom \cite{bloom}. We warn the reader that our notation $P_n$ is non-standard, in that $n$ refers to the dimension of the permutahedron. \\

Now, given any subset $V=\{v_1,\ldots,v_{n+1}\}\subset H^2(X)_\mathbb{Z}$, we can choose closed, connected embedded surfaces $\Sigma_i\subset X$ such that $[\Sigma_i]$ is Poincar\'{e} dual to $v_i$. We assume that the surfaces are generic in the sense that they are pairwise transverse, and $\Sigma_i\cap \Sigma_j \cap \Sigma_k=\emptyset$ for distinct $i,j,k$. Apply the construction of the previous paragraph to the collection $\mathcal{S}=\{\Sigma_1,\ldots,\Sigma_{n+1}\}$. In this way, to any subset $V\subset H^2(X)_\Z$ of size $n+1$ we construct a family of metrics parametrized by a permutahedron $P_n$, and obtain an associated period map
\begin{equation}\label{eq:periodmappermutahedron}
    \Pi_{X,V}: P_n \longrightarrow \overline{{\rm{Gr}}}^+(H^2(X)).
\end{equation}
The next proposition describes the behavior of this period map on the faces of the permutahedron $P_n$. The statement is more general than what is needed for Theorem \ref{thm:main2}. Write $V_I$ for the subspace of $H^2(X)$ spanned by $\{v_i\}_{i\in I}$. For a subspace $W\subset H^2(X)$, we write $W^\perp$ for its complement with respect to the intersection pairing, and $\text{null}(W)=W\cap W^\perp$.\\

\begin{prop}\label{prop:periodmapfaces}
Let $\Pi_{X,V}:P_n\to {\overline{\rm{Gr}}}^+(H^2(X))$ be the period map associated to a family of metrics parametrized by the permutathedron $P_n$, constructed from $V=\{v_1,\ldots,v_{n+1}\}\subset H^2(X)_\Z$ as above. For a face $F_{\mathbf{I}}\subset \partial P_n$ corresponding to $\mathbf{I}=\{I_i\}_{i=1}^l$ as in \eqref{eq:nestedseq}, and $g\in {\rm{int}}(F_\mathbf{I})$, we have
    \begin{equation}\label{eq:periodmapfaces}
    \Pi_{X,V}\left( g \right) = H_{l+1}^+ +  \sum_{i=1}^{l} H_i^+  + {\rm{null}}(V_{I_i}) 
    \end{equation}
    for some collection of $H_i^+ \in {\rm{Gr}}^+(V_{I_i} \cap V_{I_{i-1}}^{\perp})$, where $i=1,\ldots,l$, and $H_{l+1}^+\in 
 {\rm{Gr}}^+( V_{I_{l}}^{\perp}) $.
\end{prop}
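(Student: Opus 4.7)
My plan is to decompose the broken metric $g$ along the collection $\mathcal{C}_{\mathbf{I}}=\{Y_{I_1},\ldots,Y_{I_l}\}$ and combine a multi-hypersurface version of Proposition \ref{prop:main} with Poincar\'e duality computations. Cutting $X$ along $\mathcal{C}_{\mathbf{I}}$ produces the pieces
\[
Z_0=X_{I_1},\quad Z_i=X_{I_{i+1}}\setminus\text{int}(X_{I_i})\;\;(1\le i\le l-1),\quad Z_l=X\setminus\text{int}(X_{I_l}),
\]
each carrying a cylindrical end metric induced by $g$. The natural extension of Proposition \ref{prop:main} to multiple separating hypersurfaces (proved by iterated application along the nested sequence $I_1\subsetneq\cdots\subsetneq I_l$, or by a direct adaptation of the appendix argument) gives
\[
\Pi_{X,V}(g)=\sum_{i=0}^{l}H^+_{Z_i}+\sum_{i=1}^{l}\text{im}(\delta_i),
\]
where $H^+_{Z_i}\subset H^2(X)$ is the image under $H^2_c(Z_i)\to H^2(X)$ of a lift of $\mathcal{H}^+_{Z_i}\subset\widehat{H}^2(Z_i)$, and $\delta_i\colon H^1(Y_{I_i})\to H^2(X)$ is the Mayer--Vietoris connecting map for the two-piece splitting $X=X_{I_i}\cup_{Y_{I_i}}(X\setminus\text{int}(X_{I_i}))$.

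Next I would use Poincar\'e duality to identify the image of $H^2_c(Z_i)\to H^2(X)$ in each case. The innermost piece $Z_0=X_{I_1}$ deformation retracts onto the $2$-complex $\bigcup_{j\in I_1}\Sigma_j$ (under the generic transversality assumption), so $H_2(Z_0)$ is generated by $\{[\Sigma_j]\}_{j\in I_1}$ and the image of $H^2_c(Z_0)\to H^2(X)$ is $V_{I_1}$; the quotient $\widehat{H}^2(Z_0)\cong V_{I_1}/\text{null}(V_{I_1})$ carries the non-degenerate form inherited from $H^2(X)$, so $\mathcal{H}^+_{Z_0}$ lifts to $H_1^+\in\text{Gr}^+(V_{I_1})=\text{Gr}^+(V_{I_1}\cap V_{I_0}^\perp)$, with the convention $I_0=\emptyset$. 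For the outermost piece $Z_l$, any $2$-cycle in $Z_l$ is disjoint from every $\Sigma_j$ with $j\in I_l$, giving image in $V_{I_l}^\perp$; a tubing argument using the vanishing of algebraic intersections realizes all of $V_{I_l}^\perp$, and $\mathcal{H}^+_{Z_l}$ lifts to $H_{l+1}^+\in\text{Gr}^+(V_{I_l}^\perp)$. Combining the two arguments for intermediate $Z_{i-1}$ yields the image $V_{I_i}\cap V_{I_{i-1}}^\perp$ and $H_i^+\in\text{Gr}^+(V_{I_i}\cap V_{I_{i-1}}^\perp)$.

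Finally I would identify $\text{im}(\delta_i)=\text{null}(V_{I_i})$. For the inclusion, formula \eqref{eq:deltamap} represents $\delta_i[\alpha]$ by a form $\rho(t)\alpha\wedge dt$ supported in a collar of $Y_{I_i}$; choosing $\rho$ supported on one side or the other of $Y_{I_i}$ places the class in either $V_{I_i}$ or $V_{I_i}^\perp$, so $\text{im}(\delta_i)\subset V_{I_i}\cap V_{I_i}^\perp=\text{null}(V_{I_i})$. Equality follows from a dimension count: Lemma \ref{lemma:mv2} applied to the splitting $X=X_{I_i}\cup_{Y_{I_i}}(X\setminus\text{int}(X_{I_i}))$, combined with the identifications $b^+(X_{I_i})=b^+(V_{I_i}/\text{null}(V_{I_i}))$ and $b^+(X\setminus\text{int}(X_{I_i}))=b^+(V_{I_i}^\perp/\text{null}(V_{I_i}))$ from the previous step, yields $\dim\text{im}(\delta_i)=\dim\text{null}(V_{I_i})$. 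Assembling the three ingredients produces the formula \eqref{eq:periodmapfaces}. The main obstacle I anticipate is the surjectivity statement in the Poincar\'e duality step --- realizing every class in $V_{I_i}\cap V_{I_{i-1}}^\perp$ by a $2$-cycle supported in the annular region $Z_{i-1}$ --- which requires tubing arguments exploiting the vanishing of algebraic intersections, and is somewhat delicate when the $\Sigma_j$ are not pairwise disjoint.
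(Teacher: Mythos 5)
Your proposal is correct in outline, and its first half (cutting along $\mathcal{C}_{\mathbf{I}}$ into the annular pieces, the formula $\Pi_{X,V}(g)=\sum_i H^+_{Z_i}+\sum_i\mathrm{im}(\delta_i)$ from the multi-hypersurface version of Proposition \ref{prop:main}, and the containments $H^+_{Z_{i-1}}\subset V_{I_i}\cap V_{I_{i-1}}^\perp$, $\mathrm{im}(\delta_i)\subset\mathrm{null}(V_{I_i})$) coincides with the paper's proof. Where you diverge is in how you close the argument: you propose to compute the images of $H^2_c(Z_i)\to H^2(X)$ \emph{exactly} (equal to $V_{I_i}\cap V_{I_{i-1}}^\perp$, resp. $V_{I_l}^\perp$) via Poincar\'e duality plus tubing off algebraically cancelling intersection points, and then to get $\mathrm{im}(\delta_i)=\mathrm{null}(V_{I_i})$ from Lemma \ref{lemma:mv2} applied to each two-piece splitting. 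The paper instead proves only the easy containments and then closes with a purely algebraic dimension count on the flag $V_{I_1}\subsetneq\cdots\subsetneq V_{I_l}$ (the iterated identity \eqref{eq:indefidentitynullspaces} handling the nullspaces), using that the left side of \eqref{eq:periodmapfaces} already has dimension $b^+(X)$; no realization of classes by cycles is needed, and no claim is made that each $\mathrm{im}(\delta_i)$ individually equals $\mathrm{null}(V_{I_i})$. Your route is viable --- the tubing argument does go through (arcs in $\Sigma_k$ can avoid the finitely many intersection points with the other surfaces, and in the intermediate regions the tubes stay in small neighborhoods of $\Sigma_k\subset X_{I_{i-1}}\subset X_{I_i}$), and since $V_{I_i}^\perp$ and $V_{I_i}\cap V_{I_{i-1}}^\perp$ are rationally defined the rational classes you realize do span --- and it even yields slightly more (the exact images of compactly supported cohomology and the equality $\mathrm{im}(\delta_i)=\mathrm{null}(V_{I_i})$). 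But it costs you the delicate geometric step you flag, plus a small lemma you should make explicit: that $b^+$ of $\widehat{H}^2(Z_i)$ agrees with $b^+$ of the image of $H^2_c(Z_i)$ in $H^2(X)$, which holds because both are quotients of $H^2_c(Z_i)$ by subspaces of the radical of its pairing; only with this does ``$\mathcal{H}^+_{Z_i}$ lifts to a \emph{maximal} positive subspace'' follow. If you want the leaner argument, note that only the containments plus a dimension bound are needed, which is exactly what the paper's linear-algebra count supplies.
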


\vspace{.2cm}

Note that in this statement, and in general, if $W$ is a vector space with a negative definite inner product, then $\text{Gr}^+(W)$ is a point, corresponding to the zero subspace of $W$. 

\vspace{.2cm}

\begin{proof}
    The hypersurfaces $Y_i:=Y_{I_i}$ decompose $X$ into the union of $X_i:= X_{I_i} \setminus \text{int}(X_{I_{i-1}})$ along their boundaries, where $i$ ranges from $1$ to $l+1$. Note $\partial X_i = Y_{i} \cup -Y_{i-1}$. By convention we define $Y_0=Y_{l+1}=\emptyset$ and also $X_{I_{l+1}}=X$. For $g\in {\rm{int}}(F_\mathbf{I})$ we have 
    \[
        \Pi_{X,V}\left( g \right) = \sum_{i=1}^{l+1} H_i^+ + \text{im}(\delta),
    \]
    where $H_i^+\subset H_c^2(X_i)$ maps to $\widehat{H}^2(X_i)$ as the space of $L^2$ self-dual harmonic $2$-forms on $X_i$, and 
    \[
        \delta:H^1(Y)\to H^2(X),
    \]
    where $Y=\cup_{i=1}^{l} Y_{i}$, and $\delta= \sum \delta_i$ where $\delta_i:H^1(Y_i)\to H^2(X)$ is from the Mayer--Vietoris sequence for $X$ decomposed along $Y_i$. We claim that the image of 
    \begin{equation}\label{eq:viimage}  
        H^2_c(X_{I_i})\to H^2(X)
    \end{equation}
    is equal to $V_i$, where $V_i:=V_{I_i}$ for $i=1,\ldots, l$ and $V_{l+1}=H^2(X)$. If $i=l+1$ this is by definition. Otherwise, $X_{I_i}$, being the regular neighborhood $\cup_{j\in I_i}\Sigma_j$, is a plumbing of disk bundles over surfaces, and $H_c^2(X_{I_i})$ is generated by the Poincar\'{e} duals of the surfaces $\Sigma_j$, which are sent under \eqref{eq:viimage} to the $v_j$, establishing the claim. Here it is important that the $\Sigma_i$ are connected, and also that the $\Sigma_i$ are pairwise transverse and $\Sigma_i\cap \Sigma_j \cap \Sigma_k=\emptyset$ for distinct $i,j,k$.\\
    
    Now $H^2_c(X_i)\to H^2(X)$ factors through \eqref{eq:viimage}, and is injective on $H_i^+$ (by the nondegeneracy of the intersection form on $H_i^+$). Thus $H_i^+\subset V_{i}$. As forms in $H_i^+$ are compactly supported outside of $X_{I_{i-1}}$ we have $H_i^+\subset V_{i-1}^\perp$. Further, $\text{im}(\delta_i)$ is in the nullspace of the pairing on $H^2_c(X_{I_i})$, and so maps into the nullspace of $V_i\subset H^2(X)$. We obtain $\text{im}(\delta)\subset \sum_{i=1}^l \text{null}(V_i)$.\\

    We have shown that the left side of \eqref{eq:periodmapfaces} is contained in the right side. To complete the proof it suffices to show that the dimension of the right hand side is equal to $b^+(X)$. Write $N_i=\text{null}(V_i)$. Then, in a similar way as is proved Lemma \ref{lemma:mv2}, we have for $1\leq i \leq l+1$:
    \[
        b^+(V_{i}) = b^+(V_{i-1}) + b^+(V_{i-1}^\perp \cap V_i) + |N_{i-1}| - |N_{i-1}\cap N_i|.
    \]
    Here $|W|$ is shorthand for the dimension of $W$, and $b^+(W)$ is the dimension of a maximal positive subspace of $W$. Iterating this identity, we obtain
    \begin{equation} \label{eq:indefidentitynullspaces}
        b^+(X) = b^+(V_{l+1}) = \sum_{i=1}^{l+1} b^+(V_{i-1}^\perp\cap V_i) + \sum_{i=1}^l|N_{i-1}| - |N_{i-1}\cap N_i |.
    \end{equation}
    Using $N_i=V_i\cap V_i^\perp$ and $V_{i+1}^\perp\subset V_i^{\perp}$, we verify $N_1\cap N_2 = N_1\cap \sum_{i=2}^l N_i$. Then
    \[
        |\sum_{i=1}^l N_i| =  |N_1| + |\sum_{i=2}^l N_i | - | N_1 \cap \sum_{i=2}^l N_i| = |N_1| -|N_1\cap N_2| + |\sum_{i=2}^l N_i|.
    \]
    Continuing in this fashion, $|\sum_{i=1}^l N_i|$ is equal to the last sum in \eqref{eq:indefidentitynullspaces}. The $H_i^+$ are pairwise orthogonal and $|H_i^+| = b^+(V_{i-1}^\perp\cap V_i)$, so \eqref{eq:indefidentitynullspaces} computes $b^+(X)=|\sum_{i=1}^{l+1} H_i^+  + N_i|$, as desired.
\end{proof}

\vspace{.2cm}

\begin{remark}\label{rem:11}{\emph{
Consider the conclusion of Proposition \ref{prop:periodmapfaces} when the intersection pairing on $V_i$ is non-degenerate for each $i$, with notation as above. Then we have a direct sum decomposition
\[
    H^2(X) = \bigoplus_{i=1}^{l+1} V_i\cap V_{i-1}^\perp,
\]
and \eqref{eq:periodmapfaces} says that an element in the face $F_\mathbf{I}$ associated to the nested sequence $\mathbf{I}=\{I_i\}_{i=1}^l$ is sent to a sum of positive subspaces with respect to this decomposition. Thus we have
\begin{equation}\label{eq:periodmappolytopenondeg}
    \Pi_{X,V}(F_\mathbf{I}) \subset \prod_{i=1}^{l+1} \text{Gr}^+( V_i\cap V_{i-1}^\perp )  \subset \text{Gr}^+(H^2(X)).
\end{equation}
If both $b^+(X)$ and $b^-(X)$ are greater than $1$, then the middle manifold in \eqref{eq:periodmappolytopenondeg} is of codimension at least $2$ in $\text{Gr}^+(H^2(X))$, whenever it is proper. A similar remark holds in the case in which one or more of the $V_i$ are degenerate. This is one reason that our proof of Theorem \ref{thm:main} given below does not directly generalize to the case when both $b^+(X)$ and $b^-(X)$ are greater than $1$.
}}
\end{remark}

\vspace{.2cm}

\subsection{The case $b^+=1$ and the proof of Theorem \ref{thm:main2}}\label{subsec:proofsurjectivity}

We now restrict to the case in which $b^+(X)=1$. Let $n=b_2(X)-1$. We may assume $n\geq 2$, the case $n=1$ having been explained at the introduction to this section. Choose an isometry between $H^2(X)$, with its intersection pairing, and $\R^{1,n}$, which is $\R^{n+1}$ equipped with the bilinear form
\begin{equation}\label{eq:standardbilinearform}
    x\cdot y = x_{0}y_0 - x_1y_1 -\cdots - x_n y_n.
\end{equation}
We also require that the integral lattice $H^2(X)_\Z$ is sent to $\Z^{1,n}\subset \R^{1,n}$ under this isomorphism. Here we use the fact that for $n\geq 2$, any unimodular integral symmetric bilinear form on $\Z^{n+1}$ of signature $(1,n)$ is isomorphic to $\Z^{1,n}$. (If $n=1$ there are two cases, as seen through Examples \ref{ex1} and \ref{ex2}.) Recall that $n$-dimensional hyperbolic space may be defined as the hyperboloid
\[
    \mathbb{H}^n = \{ x\in \R^{1,n} \; \mid \; x\cdot x = 1, \; x_0>0\} 
\]
with metric induced by the negative of \eqref{eq:standardbilinearform}. The space of positive lines $\ell\subset \R^{1,n}$ is identified with $\mathbb{H}^n$ by sending $\ell$ to $\ell\cap \mathbb{H}^n$. In this way we identify ${\rm{Gr}}^+(H^2(X))$ with $\mathbb{H}^n$, preserving rational points, i.e. points corresponding to lines spanned by rational vectors.\\

Let $V=\{v_1,\ldots, v_{n+1}\}\subset H^2(X)_\Z$ be a linearly independent set, and construct an associated family of metrics parametrized by $P_n$. We then have the period map
\[
    \Pi_{X,V}:P_n \longrightarrow \mathbb{H}^n
\]
and our goal is show that this is surjective. To achieve this we will choose $V$ such that it determines a simplex $\Delta_V\subset\mathbb{H}^n$, and show that $\Pi_{X,V}$ maps $P_n$ onto $\Delta_V$. To this end, we require an elementary lemma for maps of permutahedra to simplices.\\

\begin{figure}[t]
\centering
\hspace*{-0.25cm}\includegraphics[scale=0.5]{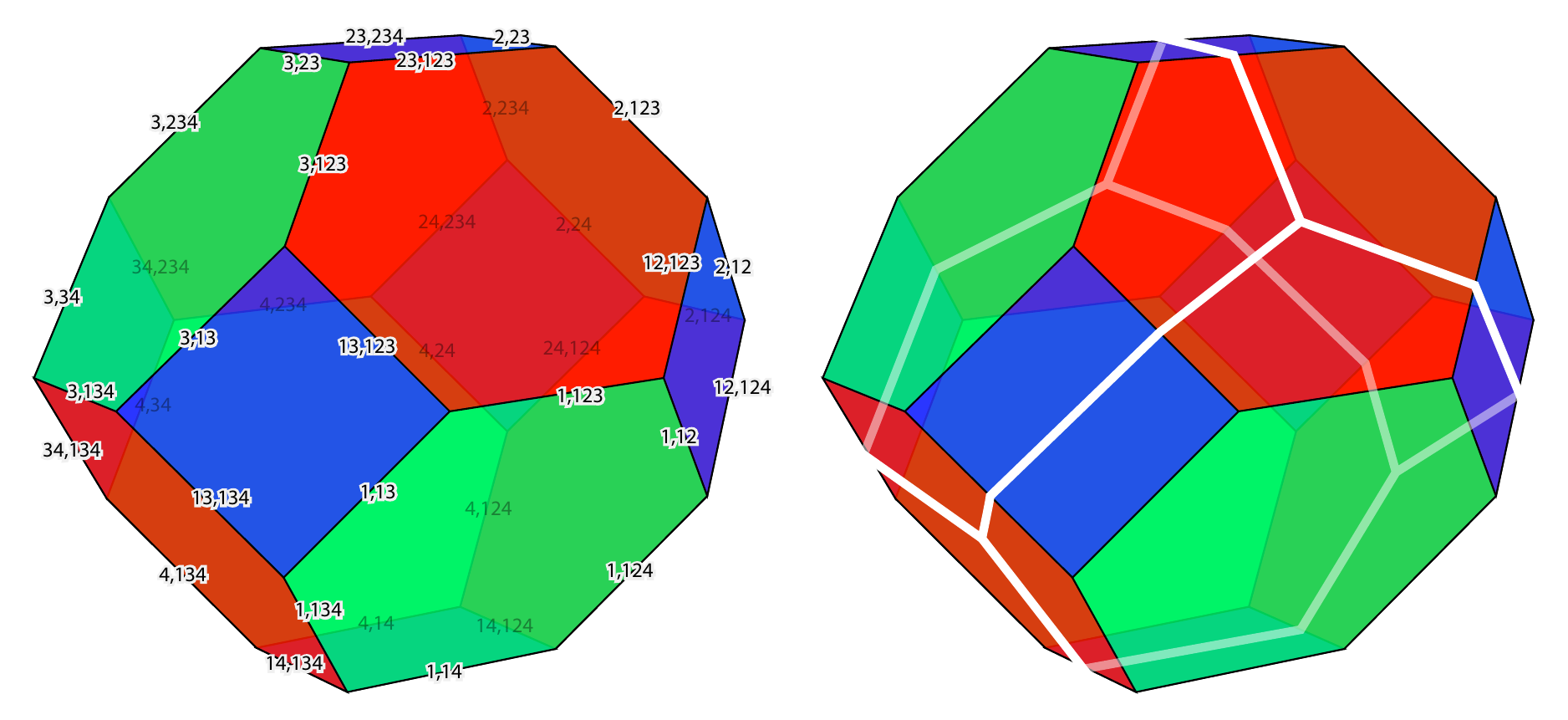}
\caption{\small{The permutahedron $P_3$. Left: Each edge is labelled; ``$34,134$'' is shorthand for the nested sequence $ \{3,4\} \subset \{1,3,4\}$. The green (resp. red) hexagonal faces correspond to nested sequences with one subset of size $1$ (resp. $3$), such as $\{1\}$ (resp. $\{1,2,3\}$). Square blue faces correspond to nested sequences with one subset of size $2$. Paint the faces of the $3$-simplex $\Delta_3$ green. Then $P_3$ is obtained from $\Delta_3$ by cutting off corners (revealing red), and further truncating (revealing blue). Right: The map $\mathbf{i}:\Delta_3\to P_3$ sends the $1$-skeleton of $\Delta_3$ into $\partial P_3$ as shown. The map $\mathfrak{F}:P_3\to \Delta_3$ is the result of collapsing the truncated faces (red collapses to vertices, and blue collapses to edges).}}\label{fig:p3}
\end{figure}

The $n$-simplex $\Delta_n$ may be described combinatorially as having its proper faces labelled by nonempty proper subsets $I\subset \{1,\ldots,n+1\}$. The face $F_I$ is of codimension $|I|$, and $F_I\subset F_J$ if and only if $J\subset I$. This is the description of $\Delta_n$ that arises from realizing it as the region in $n$-dimensional space bounded by a collection of hyperplanes labelled by $1,\ldots,n+1$.\\ 

There is a ``forgetful'' map $\mathfrak{F}:P_{n}\to \Delta_n$ which on the boundary maps the face $F_\mathbf{I}$ to $F_{I_l}$ where $I_l$ is the maximal proper subset appearing in the nested sequence $\mathbf{I}$. In fact, the permutahedron $P_{n}$ can be viewed as a truncation of the simplex $\Delta_n$, see \cite{bloom}, and also Figures \ref{fig:hexagon} and \ref{fig:p3}. From this viewpoint, the map $\mathfrak{F}$ is the result of collapsing faces on $P_n$ introduced by truncation onto corresponding faces in $\Delta_n$. We have another map
\[
    \mathbf{i}:\Delta_n\longrightarrow P_n
\]
which is described as follows: viewing $P_{n}$ inside of $\Delta_n$ as the result of truncation, send a given point in $\Delta_n$ to its closest point in $P_{n}$. For any face $F_I\subset \partial \Delta_n$, we have
\[
    \mathbf{i}(F_I)\subset \bigcup F_\mathbf{I}\subset \partial P_n
\]
where the union is over all $\mathbf{I}=\{I_i\}_{i=1}^l$ satisfying $I\subset I_l$. Further, $\mathfrak{F}\circ \mathfrak{i}=\text{id}_{\partial \Delta_n}$. The following says that if a map sends faces to faces in the same manner as does $\mathfrak{F}$, then it is surjective.

\vspace{.2cm}

\begin{lemma}\label{lemma:surjectivemapofpolytopes}
    Let $f:P_{n}\to \Delta_n$ be a continuous map such that for each face $F_\mathbf{I}\subset \partial P_{n}$ corresponding to a nested sequence $\mathbf{I}=\{I_i\}_{i=1}^l$ we have  $f(F_\mathbf{I})\subset F_{I_l}\subset \partial \Delta_n$. Then $f$ is surjective.
\end{lemma}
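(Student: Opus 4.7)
My plan is to reduce to a standard fact about self-maps of the simplex via the map $\mathbf{i}:\Delta_n\to P_n$ introduced just above the lemma. Consider the composition $g=f\circ \mathbf{i}:\Delta_n\to \Delta_n$. The first step is to verify that $g$ preserves each proper face of $\Delta_n$: for a face $F_I\subset \partial \Delta_n$ we have $\mathbf{i}(F_I)\subset \bigcup F_\mathbf{I}$, where the union is over nested sequences $\mathbf{I}=\{I_i\}_{i=1}^l$ satisfying $I\subset I_l$; the hypothesis on $f$ then gives $f(F_\mathbf{I})\subset F_{I_l}$, and since $I\subset I_l$ is equivalent to $F_{I_l}\subset F_I$, we conclude $g(F_I)\subset F_I$. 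Because $\mathrm{im}(f)\supset \mathrm{im}(g)$, it suffices to show that $g$ is surjective.

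Next I would exploit convexity. Each face $F_I$ and $\Delta_n$ itself being convex, the straight-line homotopy
\[
    H(x,t)=(1-t)g(x)+tx
\]
is a continuous map $H:\Delta_n\times[0,1]\to \Delta_n$ with $H(F_I\times[0,1])\subset F_I$ for every $I$. Thus $H$ is a homotopy of pairs $(\Delta_n,\partial\Delta_n)\to(\Delta_n,\partial\Delta_n)$ from $g$ to $\mathrm{id}_{\Delta_n}$, and in particular $g|_{\partial \Delta_n}$ is homotopic to the identity on $\partial\Delta_n$. The case $n=0$ is vacuous, so assume $n\geq 1$; then $\partial\Delta_n\cong S^{n-1}$ and $g|_{\partial \Delta_n}$ has degree one, hence is not null-homotopic in $\partial\Delta_n$.

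I would finish by the usual no-retraction argument. Suppose for contradiction that some $p\in\mathrm{int}(\Delta_n)$ is not in the image of $g$. Then $g$ factors through $\Delta_n\setminus\{p\}$, which deformation retracts via some $r$ onto $\partial\Delta_n$, yielding a continuous map $r\circ g:\Delta_n\to \partial\Delta_n$. Since $g$ already sends $\partial\Delta_n$ to itself and $r$ is the identity on $\partial\Delta_n$, the restriction $(r\circ g)|_{\partial\Delta_n}$ equals $g|_{\partial\Delta_n}$. The map $r\circ g$ then extends $g|_{\partial\Delta_n}$ to the contractible disk $\Delta_n$, forcing $g|_{\partial\Delta_n}$ to be null-homotopic in $\partial\Delta_n$ and contradicting the previous paragraph. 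Therefore $g$, and hence $f$, is surjective. I do not foresee any real obstacle; the only care required is in tracking the face-preservation property through $\mathbf{i}$, after which the proof is the standard homotopy/degree argument showing that a self-map of $\Delta_n$ sending each face to itself cannot miss an interior point.
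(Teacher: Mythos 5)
Your proposal is correct and takes essentially the same route as the paper: reduce via $\mathbf{i}$ to the face-preserving self-map $g=f\circ\mathbf{i}$ of $\Delta_n$, then conclude surjectivity by a homotopy/degree argument, with your straight-line homotopy and no-retraction step simply spelling out the paper's appeal to $g$ having degree one as a map of $(\Delta_n,\partial\Delta_n)$. The only step left implicit is passing from ``every interior point is hit'' to surjectivity onto all of $\Delta_n$, which is immediate since the image of $g$ is compact, hence closed, and contains $\mathrm{int}(\Delta_n)$.
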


\vspace{.15cm}

\begin{proof}
    Let $F_I$ be a face of $\Delta_n$ corresponding to $I\subset \{1,\ldots,n+1\}$. Then
    \[
        f(\mathbf{i}(F_I)) \subset \bigcup  f\left( F_\mathbf{I} \right) 
    \]
    where the union is over all nested sequences $\mathbf{I}$ with $I\subset I_l$. By assumption, each $f(F_\mathbf{I})\subset F_{I_l}$. On the other hand, $F_{I_l}\subset F_I$ since $I\subset I_l$. Thus $f(\mathbf{i}(F_I)) \subset F_{I}$. Now $f\circ \mathbf{i}$ is a self-map on the simplex $\Delta_n$ which maps each face into itself. It is straightforward to prove that any such map is homotopic rel $\partial \Delta_n$ to the identity map. Then $f\circ \mathbf{i}$ is surjective, as it has degree $1$ as a map on $(\Delta_n,\partial \Delta_n)$. The surjectivity of $f$ follows.
\end{proof}

\vspace{.2cm}

With this background, we now prove Theorem \ref{thm:main2}.

\begin{proof}[Proof of Theorem \ref{thm:main2}]
Let $V=\{v_1,\ldots, v_{n+1}\}\subset H^2(X)_\Z$ be a linearly independent set such that $V_I$ is negative definite for all subsets $I\subset \{1,\ldots,n+1\}$ of cardinality $n$. The hyperplanes 
\[
   H_i:= v_i^\perp \cap \mathbb{H}^n 
\]
bound an $n$-simplex $\Delta_V\subset \mathbb{H}^n$, and every compact hyperbolic $n$-simplex in $\mathbb{H}^n$ with rationally defined vertices arises in this fashion. We remark that every point in $\mathbb{H}^n$ is in the interior of such a simplex. Thus to show that $\Pi_X$ is onto, it suffices to show that
\begin{equation}\label{eq:proofinteriors}
    \text{int}(\Delta_V) \subset \Pi_{X,V}(\text{int}{(P_{n})})
\end{equation}
where $\Pi_{X,V}$ is the period map restricted to the $n$-dimensional permutahedron family of metrics associated to $V$. Here it is important that $\Pi_{X,V}(\text{int}{(P_{n})})$ is in the image of the period map, as the interior of $P_{n}$ parametrizes smooth (unbroken) metrics.\\

We first determine the behavior of $\Pi_{X,V}$ on the boundary of $P_{n}$. Let $\mathbf{I}=\{I_i\}_{i=1}^l$ be a nested sequence of proper non-empty subsets of $\{1,\ldots,n+1\}$, and $F_\mathbf{I}$ the corresponding codimension $l$ face of $P_{n}$. Then an application of Proposition \ref{prop:periodmapfaces} yields
\[
    \Pi_{X,V}(F_\mathbf{I}) \subset \bigcap_{i\in I_l} H_i.
\]
Next, choose a retraction $r:\mathbb{H}^n\to \Delta_V$ with the property that $r(H_i)\subset \Delta_V\cap H_i$. For example, $r$ can be the map which sends a point in $\mathbb{H}^n$ to its closest point in $\Delta_V$. Then the map
\[
    r\circ \Pi_{X,V}:P_n\longrightarrow  \Delta_V
\]
sends $F_\mathbf{I}$ into $\cap_{i\in I_l} H_i\cap \Delta_V$, the face of $\Delta_V$ corresponding to $I_l$. By Lemma \ref{lemma:surjectivemapofpolytopes}, we obtain that $r\circ \Pi_{X,V}$ surjects onto $\Delta_V$. As $r$ is the identity on $\Delta_V$, we obtain  
\begin{equation}\label{eq:proofinteriors2}
    \Delta_V \subset \Pi_{X,V}(P_{n}).
\end{equation}
Finally, $\Pi_{X,V}$ maps $\partial P_n$ into $\partial \Delta_V$, so we obtain \eqref{eq:proofinteriors}, as desired.
\end{proof}

\vspace{.35cm}

\begin{remark}\label{rem:12}{\emph{
Following Remark \ref{rem:1}, the proof of Theorem \ref{thm:main2} given above adapts to the case in which $\dim X = 4k$ for any positive integer $k$, with no essential changes.
}}
\end{remark}

\vspace{.35cm}

Let $V=\{v_1,\ldots,v_{k+1}\}\subset H^2(X)_\Z$ be a linearly independent set, and $\Pi_{X,V}$ the associated period map. In the above proof, Proposition \ref{prop:periodmapfaces} was used in the simplified case in which $V$ determines a bounded simplex in $\mathbb{H}^n$. More generally, for a nested sequence $\mathbf{I}=\{I_i\}_{i=1}^l$ set
\[
    i^+ := \min\{ j \; \mid \; V_{I_j} \text{ not negative definite }\}.
\]
Let $g\in F_{\mathbf{I}}$. If $V_{I^+}$ is degenerate, it has a 1-d null space, and $\Pi_{X,V}(g)=\text{null}(V_{I^+})$. Otherwise, 
\[
    \Pi_{X,V}(g) \subset V_{I_{i^+}}\cap V_{I_{i^+-1}}^\perp.
\]
In many cases, these conditions cut out a region in $\mathbb{H}^n$ which contains an $n$-dimensional polytope, possibly with ideal points on the sphere at infinity, and the above proof carries over to show that $\Pi_{X,V}(P_n)$ contains this polyhedron. In the next subsection we see this in some examples.\\

We remark on the role of permutahedra in the above proof. These polytopes are universal from our viewpoint, in that they parametrize a family of metrics for any collection of surfaces in $X$, regardless of the intersection pairings of the surfaces. On the other hand, if certain subcollections of surfaces do not intersect, one can often work with simpler polytopes. For example, if $k+1$ surfaces are pairwise disjoint, one can construct an associated metric family which is a $k$-simplex.\\

Along the same lines, one might ask to prove Theorem \ref{thm:main2} using pairwise disjoint surfaces with negative self-intersection, with polytopes whose codimension $1$ faces are labelled by these surfaces. One is led to search for what are called right-angled (finite volume) hyperbolic polyhedra. However, such polyhedra do not exist in $\mathbb{H}^n$ for $n>12$ \cite{pv, dufour}. This illustrates the utility of working with the general construction, avoiding conditions on how the surfaces intersect.

\vspace{.2cm}

\begin{remark}\label{rem:3}{\emph{
Following Remark \ref{rem:2}, the above proof also shows that for connected $X$ with boundary, the period map $\text{Met}_{\text{cyl}}(X)\to \text{Gr}^+(\widehat{H}^2(X))$ is surjective when $b^+(X)=1$. In this case, in the isomorphism $\widehat{H}^2(X)\to\R^{1,n}$ it may not be possible to arrange that $\widehat{H}^2(X)_\Z$ maps onto $\Z^{1,n}$. This is not important for the argument, however -- only that $\R\cdot\widehat{H}^2(X)_\Z$ is dense in the positive Grassmannian (so that there are enough simplices in $\mathbb{H}^n$ defined over these points).
}}
\end{remark}

\vspace{.3cm}

\subsection{Examples in the hyperbolic plane}\label{subsec:examples}

To illustrate the construction used in the proof of Theorem \ref{thm:main2}, we consider examples in the case that $b^+(X)=1$ and $b_2(X)=3$. Upon choosing an isometry of $H^2(X)$ with $\R^{1,2}$, the positive Grassmannian is identified with the hyperbolic plane $\mathbb{H}^2$. As before, we make these choices so that the integral lattice $H^2(X)_\Z$ is sent to $\Z^{1,2}\subset \R^{1,2}$.\\

Let $V=\{v_1,v_2,v_3\}\subset H^2(X)_\Z$, which we also view as a subset of $\Z^{1,2}\subset \R^{1,2}$. For simplicity we assume that $V$ is a linearly independent set. By choosing connected embedded surfaces $\Sigma_1,\Sigma_2,\Sigma_3\subset X$ which are Poincar\'{e} dual to $v_1,v_2,v_3$, we construct a family of (broken) metrics on $X$ parametrized by the permutahedron $P_2$. As shown in Figure \ref{fig:hexagon}, $P_2$ is a hexagon. We have an associated period map $\Pi_{X,V}:P_2 \to \mathbb{H}^2$ for this family of metrics.\\ 

The codimension 1 faces of $P_2$ are $F_{\{i\}}$ and $F_{\{i,j\}}$ where $i,j\in \{1,2,3\}$ and $i\neq j$. Proposition \ref{prop:periodmapfaces} determines the behavior of the period map on these faces. See Table \ref{table}. Briefly, the image of face $F_{\{i\}}$ (resp. $F_{\{i,j\}}$) under $\Pi_{X,V}$ is constrained in three possible ways, depending on the isomorphism type of the bilinear form restricted to the subspace $\langle v_i\rangle$ (resp. $\langle v_i, v_j\rangle$).\\

\begin{figure}[t]
\centering
\def\arraystretch{1.75}%
\begin{tabular}{l|l|l}
 Condition & Constraint on period map & Type   \\
 \hline
$v_i\cdot v_i < 0$ & $\Pi_{X,V}(F_{\{i\}}) \subset \langle v_i\rangle^\perp \cap \mathbb{H}^2$  & geodesic \\
 $v_i\cdot v_i =  0$ &   $\Pi_{X,V}(F_{\{i\}}) = \langle v_i\rangle  \cap \partial \mathbb{H}^2$ & ideal point \\
 $v_i\cdot v_i > 0$&    $\Pi_{X,V}(F_{\{i\}}) =  \langle v_i\rangle  \cap  \mathbb{H}^2$ & point   \\
 $\forall v\in \langle v_i,v_j\rangle\, \setminus \,\{0\}$\, :\, $v\cdot v<0$ \qquad&   $\Pi_{X,V}(F_{\{i,j\}}) \subset  \langle v_i,v_j\rangle^\perp  \cap  \mathbb{H}^2$ \qquad & geodesic \\
   $\exists v \in \langle v_i,v_j\rangle\, \setminus \,\{0\}$\,:\, $v\cdot v=0$&  $\Pi_{X,V}(F_{\{i,j\}}) = \langle v \rangle  \cap  \partial \mathbb{H}^2$ & ideal point \\
  $ \exists v \in \langle v_i,v_j\rangle$  \,:\,  $v\cdot v>0$ &  $\Pi_{X,V}(F_{\{i,j\}}) = \langle v \rangle  \cap  \mathbb{H}^2$ & point \\
 %\hline
\end{tabular}
\captionof{table}[table]{{\small{Each condition in the left column gives a constraint on where the period map sends a face of the hexagon $P_2$. The types of subsets of $\mathbb{H}^2$ appearing are described in the right column. The notation $\langle \cdot \rangle$ denotes span of vectors, and $\partial \mathbb{H}^2$ denotes the ideal boundary of $\mathbb{H}^2$, which is a circle.}}}\label{table}
\end{figure}

A particularly symmetric family of examples $V=\{v_1,v_2,v_3\}$ is given by
\begin{equation}\label{eq:symmetricfamily}
    v_{i+1} = \left( 1 \; | \; a\cos(2 i \pi/3), \; a\sin(2 i \pi/3) \right), \qquad i\in \{0,1,2\}.
\end{equation}
where $a\in \R\setminus \{0\}$. Here and below we write vectors in $\R^{1,2}$ as $x=(x_0 \, | \, x_1, x_2)$, to emphasize the signature of the bilinear form. Each $V$ in this family is not integral (nor rational), but nonetheless illustrates the behavior of the face constraints of the period map. When $a>2$, the $v_i^\perp$ give geodesics forming a hyperbolic triangle. However, for all $a\neq 0$, the constraints on the period map give some finite area polygon in $\mathbb{H}^2$. See Figure \ref{fig:examples}, where illustations are given in the Poincar\'{e} disk for $a>0$. When $a<0$ there are similar pictures, with some colors interchanged.\\

\begin{figure}[p!]
\centering
\hspace*{-0.25cm}\includegraphics[scale=1.05]{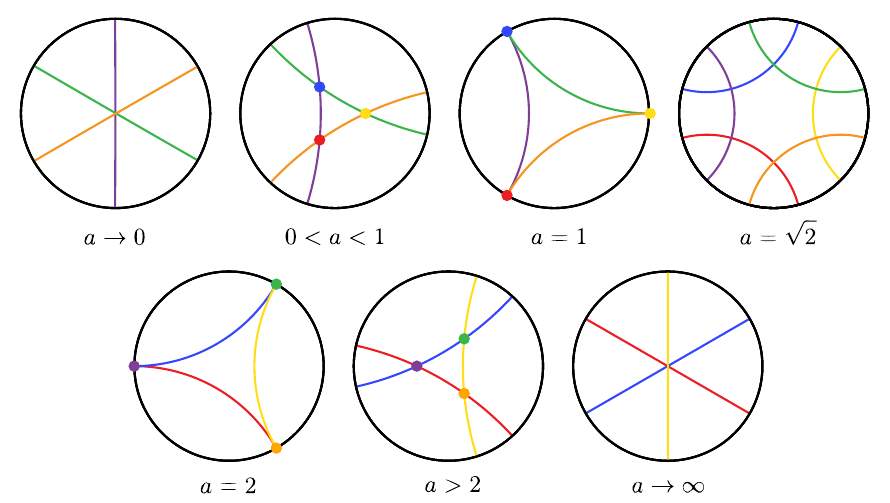}
\caption{\small{The subsets defining the face constraints from Table \ref{table}, given for the family of $V=\{v_1,v_2,v_3\}$ defined in \eqref{eq:symmetricfamily}. The colors correspond to the face colors of the hexagon in Figure \ref{fig:hexagon}.}}\label{fig:examples}
\end{figure}

\begin{figure}[p!]
\centering
\vspace{.25cm}
\hspace*{-0.25cm}\includegraphics[scale=1.05]{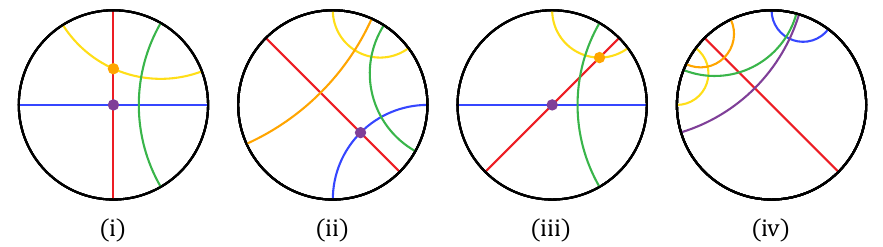}
\caption{\small{Examples of face constraint configurations for some other choices of $V$.}}\label{fig:examples2}
\end{figure}

There are many other types of configurations of geodesics and points that arise from the face constraints given in Table \ref{table}. For example, here are some sets $V=\{v_1,v_2,v_3\}\subset H^2(X)_\Z = \Z^{1,2}$, which give rise to the configurations depicted in Figure \ref{fig:examples2}:
\begin{center}
\def\arraystretch{1.65}%
\begin{tabular}{llll}
\text{(i):} & $v_1 = (0 \,| \, 1,0 )$ & $v_2 = (0\,| \,0,1)$ & $v_3 = (2\,| \,1,3)$\\
\text{(ii):} & $v_1 = (0\,| \,1,1)$ & $v_2 = (1\,| \,1,-1)$ & $v_3 = (2\,| \,1,2)$\\
\text{(iii):}&  $v_1 = (0\,| \,1,-1)$ & $v_2 = (0\,| \,0,1)$ & $v_3 = (2\,| \,1,2)$\\
\text{(iv):} & $v_1 = (0\,| \,1,1)$ & $v_2 = (3\,| \,1,3)$ & $v_3 = (3\,| \,-3,1)$\\
\end{tabular}
\end{center}

\vspace{.25cm}

Not every linearly independent $V$ gives a configuration which encloses a polygon with non-empty interior (and thus the image of the corresponding period map on $P_2$ is not guaranteed to have non-empty interior). For a simple example, take $v_1 = (1 \, | \,1,1)$, $v_2 = (0\, | \,1,1)$, $v_3 = (0 \, | \,1,-1)$.\\

\appendix

\section{Gluing harmonic forms}\label{appendix}
Here we give a proof of Proposition \ref{prop:main}, by showing how it follows from well-known gluing results for harmonic forms. We will follow \cite{clmi}. For similar gluing constructions, see also \cite{donaldson-book}. We assume the notation set up in the paragraph of \eqref{eq:maindecomp}. Write $\| \cdot \|_Y$ for the $L^2$ metric on $\Omega^\ast(Y)$ induced by $g_Y$, and so forth. In this notation we assume the metric on any cylinder is the product metric: $\| \cdot \|_{X}$ is defined using the metric $h(1)$ on $X=X(1)$, and $\|\cdot \|_{X(r)}$ using $h(r)$ on $X(r)$.\\

\noindent {\textbf{The gluing map}}

\vspace{.1cm}

\noindent We recall some results from \cite{clmi}. First, a splicing map is defined:
\begin{equation}\label{eq:slicingmap}
    \Phi_r:\mathcal{H}^+_{X_1}\oplus \mathcal{V}_Y \oplus \mathcal{H}^+_{X_2}\to \Omega^+_{h(r)}(X(r))
\end{equation}
Recall $\mathcal{H}^+_{X_i}$ is the space of $L^2$ harmonic self-dual $2$-forms on $X_i(\infty)$ with its cylindrical end metric. 
We call a 2-form $\omega_1$ on $X_1(\infty)$ an {\emph{extended}} $L^2$ harmonic form if it is harmonic and
\[
    \|\omega_1 \|_{X_1} <\infty, \qquad \|\omega_1- \pi^\ast (\alpha \wedge dt + \beta) \|_{Y\times (0,\infty) } < \infty
\]
for some harmonic forms $\alpha\in \mathcal{H}_Y^1$, $\beta\in \mathcal{H}^2_Y$, and we say that $\omega_1$ {\emph{extends}} $\alpha \wedge dt + \beta$. We have written $\pi:Y\times (0,\infty)\to  Y$ for projection, and $\pi^\ast$ is extended to $\Omega^\ast(Y)\wedge dt$ by $\pi^\ast(\alpha\wedge dt) = \pi^\ast(\alpha)\wedge dt$. A similar notion of extended $L^2$ harmonic forms is defined for $X_2(\infty)$. The space $\mathcal{V}_Y$ is the vector space of harmonic $1$-forms $\alpha$ on $Y$ such that there exist extended $L^2$ harmonic self-dual $2$-forms $\omega_i$ on $X_i(\infty)$ extending $\alpha\wedge dt + \star_Y \alpha$. The map $\delta:H^1(Y)\to H^2(X)$ induces
\[
    \mathcal{V}_Y \cong \text{im}(\delta),
\]
and also $ \mathcal{V}_Y\cong \text{im}(\iota^\ast: H^2(X)\to H^2(Y) )$ by the map $\alpha\mapsto [\star_Y \alpha]$. These last claims follow from Lemma B.1 of \cite{clmi}, restricted to the case of self-dual harmonic $2$-forms on a $4$-manifold.\\

Let $\rho:\R\to [0,1]$ be a smooth non-decreasing function satisfying $|\rho'|\leq 4$ and
\[
    \rho(t) = \begin{cases} 0, &\;\; t\leq 1/4\\
                            1, &\;\; t\geq 3/4
                            \end{cases}
\]
To describe \eqref{eq:slicingmap}, we first declare that for any element $\omega$ in its domain, $\Phi_r(\omega)|_{X_i}=\omega$, for $i=1,2$. Now consider $\omega_1\in \mathcal{H}^+_{X_1}$. We define $\Phi_r(\omega_1)$ elsewhere by:
\begin{gather*}
  \Phi_r(\omega_1)|_{Y\times [-r,0]}(y,t) = \rho(-t) \omega_1(y,t+r), \qquad \Phi_r(\omega_1)|_{X_2(r)}=0
\end{gather*}
The translation appears by the way in which we identify $X_1(r)=X_1\cup Y\times [0,r]$ as a subset of $X(r)$. The description of $\Phi_r$ for elements of $\mathcal{H}^+_{X_2}$ is similar.\\

Next, let $\beta\in \mathcal{V}_Y$. There exist unique extended $L^2$ self-dual harmonic $2$-forms $\omega_i$ on $X_i(\infty)$, extending $\omega_\alpha:=\alpha \wedge dt +  \star_Y \alpha$, such that $\omega_i$ is $L^2$ orthogonal to $\mathcal{H}^+_{X_i}$. Then
\begin{align*}
 \Phi_r(\beta)|_{Y\times [-r,0]}(y,t) &= \rho(-t)(\omega_1(y,t+r) - \pi^\ast\omega_\alpha(y) ) + \pi^\ast\omega_\alpha(y) \\[3mm]
\Phi_r(\beta)|_{Y\times [0,+r]}(y,t) &= \rho(+t)(\omega_2(y,t-r) - \pi^\ast\omega_\alpha(y) ) + \pi^\ast\omega_\alpha(y)
\end{align*}

\vspace{.1cm}

Let $\mathbf{P}_r$ denote $L^2$ projection of $2$-forms on $X(r)$ to $\mathcal{H}^+_{X(r)}$, the space of $h(r)$-harmonic $2$-forms. Note that the image of $\mathbf{P}_r \Phi_r$ lies in the $h(r)$-self-dual harmonic $2$-forms. It follows from the results in \cite{clmi} (see Lemma 4.1 and Appendix B) that there exists an $R$ such that for $r\geq R$, we have 
\begin{equation}\label{eq:expdecay}
    \| \mathbf{P}_r \Phi_r(\omega) -\Phi_r(\omega) \|_{X(r)} \leq e^{-cr} \|\Phi_r(\omega)\|_{X(r)}
\end{equation}
where $c>0$ is a constant, independent of $r$, determined by the spectrum of the $g_Y$-Laplacian on forms of $Y$. Furthermore, for large $r$, the following composition is an isomorphism:
\begin{equation*}\label{eq:slicingmapcomposed}
    \mathbf{P}_r\circ\Phi_r:\mathcal{H}^+_{X_1}\oplus \mathcal{V}_Y \oplus \mathcal{H}^+_{X_2}\to \mathcal{H}^+_{X(r)}
\end{equation*}

\vspace{.15cm}

\noindent {\textbf{Comparison of metrics}}

\vspace{.1cm}

\noindent Let us be more explicit about the diffeomorphisms $f_r:X\to X(r)$. Always assume $r\geq 1$. Write $s$ for the coordinate in $[-1,1]$, and $t$ in $[-r,r]$. Choose diffeomorphisms $\chi_r:[-1,1]\to [-r,r]$ such that $\chi_r'(s)=1$ for $1-|s|<\varepsilon$ and for all $s\in[-1,1]$, $\chi_r'(s)\geq 1$ and $\chi'_r(s)\leq Cr$ for some fixed $C>1$, independent of $r$. Then define diffeomorphisms
\begin{gather*}
    f_r:X\to X(r),\\
    f_r|_{X_i} = \text{id}_{X_i}, \qquad 
    f_r|_{Y\times [-1,1]} = \text{id}_{Y} \times \chi_r.
\end{gather*}

Let $\omega\in \Omega^2(X(r))$. Write $\omega_i=\omega|_{X_i}$. On $Y\times [-r,r]$ we can write $\omega = \alpha_t \wedge dt + \beta_t$ where $\alpha_t\in \Omega^1(Y)$ and $\beta_t\in \Omega^2(Y)$ depend smoothly on $t$. Then
\[
    \| \omega\|_{X(r)}^2 = \sum_{i=1}^2\| \omega\|_{X_i}^2+ \int_{-1}^1 \chi_r'(s) (\| \alpha_s \|_Y^2 + \|\beta_s\|_Y^2 ) ds
\]
where $s=\chi_r^{-1}(t)$. Turning next to $f_r^\ast \omega$, we have
\[
    \| f_r^\ast \omega\|_{X}^2 = \sum_{i=1}^2\| \omega\|_{X_i}^2+ \int_{-1}^1 (\chi_r'(s)^2\| \alpha_s \|_Y^2 + \|\beta_s\|_Y^2 ) ds,
\]
which is the squared $L^2$-norm of $\omega$ defined using the metric $g(r)$. We see that
\begin{equation}\label{eq:metriccomparison}
    \frac{1}{Cr} \| \omega\|^2_{X(r)} \leq \| f_r^\ast \omega\|^2_{X} \leq Cr \| \omega\|^2_{X(r)}.
\end{equation}

\vspace{.3cm}

\noindent {\textbf{Proof of Proposition \ref{prop:main}}}

\vspace{.1cm}

\noindent With this background established, we now turn to prove Proposition \ref{prop:main}. Note that $f_r^\ast \mathbf{P}_r$ is equal to the projection onto the $g(r)$-self-dual 2-forms of $X$. Thus our goal is to show that the image of $f_r^\ast \mathbf{P}_r\Phi_r$ as $r\to \infty$ converges to the subspace in \eqref{eq:propmain}.\\

We begin with some elementary observations. If $\{\omega_i\}_{i=1}^{\infty}$ and $\omega$ are closed forms in $\Omega^k(X)$, then to show $[\omega_i]\to [\omega]$ in $H^k(X)$, it suffices to show that 
\begin{equation}\label{eq:weakconvergence}
    \lim_{i\to \infty}\int_X \omega_i\wedge \eta = \int_X \omega\wedge \eta
\end{equation}
for every closed form $\eta$ of complementary degree. Further, suppose $X$ has a metric inducing an $L^2$ norm on forms, and suppose $\|\omega_i-\omega_i'\|_{L^2}\to 0$ for some other sequence of (not necessarily closed) forms $\{\omega'_i\}_{i=1}^{\infty}$. Then to show $[\omega_i]\to [\omega]$, it suffices to show \eqref{eq:weakconvergence} with $\omega'_i$ replacing $\omega_i$.\\

\noindent \textbf{Step 1\;\;} Let $\alpha\in \mathcal{V}_Y$. Then we claim that the following holds in $H^2(X)$:
\begin{equation}\label{eq:secondstep}
    \lim_{r\to \infty} [\frac{1}{2r}  f_r^\ast \mathbf{P}_r\Phi_r(\alpha)]  = \delta[\alpha].
\end{equation}
We first consider the forms $\frac{1}{2r}  f_r^\ast \Phi_r(\alpha)$ without the projection $\mathbf{P}_r$. We have
\begin{equation}\label{eq:1over2rexp}
    \frac{1}{2r}f^\ast_r\Phi_r(\alpha) = \frac{1}{2r} \chi_{Y\times (-1,1)} f_r^\ast\omega'+ \frac{1}{2r} \chi_{Y\times (-1,1)} f_r^\ast  \omega_\alpha + \frac{1}{2r}\sum_{i=1,2}\chi_{X_i}\omega_i
\end{equation}
where $\omega'=\Phi_r(\beta)-\pi^\ast(\omega_\alpha)$, recalling $\omega_\alpha=\alpha \wedge dt +  \star_Y \alpha$. The $\chi_S$ are characteristic functions. The terms $\chi_{X_i}\omega_i$ have $L^2$ norm on $X$ independent of $r$, and so the last term in \eqref{eq:1over2rexp} converges in $L^2$ to zero. Utilizing \eqref{eq:metriccomparison}, the $L^2$ norm of the first term in \eqref{eq:1over2rexp} satisfies
\[
     \| \frac{1}{2r} f_r^\ast \omega' \|_{L^2(Y\times (-1,1))}\leq \left(\frac{C}{r}\right)^{1/2} \left( \| \omega_1 - \pi^\ast \omega_\alpha \|_{L^2(Y\times (0,\infty))} + \| \omega_2 - \pi^\ast \omega_\alpha \|_{L^2(Y\times (-\infty,0))} \right)
\]
Here $\omega_i$ are extended $L^2$ self-dual harmonic $2$-forms on $X_i(\infty)$ extending $\omega_\alpha$. Thus the first term in \eqref{eq:1over2rexp} converges in $L^2$ to zero as $r\to \infty$. The middle term in \eqref{eq:1over2rexp} is
\[
\omega_r := \frac{1}{2r} \chi_{Y\times (-1,1)} f_r^\ast  \omega_\alpha = \frac{1}{2r} \chi_{Y\times (-1,1)}\left(\star_Y \alpha + \chi_r'(s) \alpha \wedge ds\right).
\]
At this point we may ask that the functions $\chi_r$ are chosen such that $\chi'_r/2r$ converge in $L^2$ to a smooth bump function of integral $1$, matching the description \eqref{eq:deltamap}. On the other hand, all we need is the weaker convergence \eqref{eq:weakconvergence}. To this end, let $\eta\in \Omega^2(X)$ be closed. On $Y\times (-1,1)$, we may write $\eta = \alpha_1\wedge ds + \alpha_2$, where $\alpha_i=\alpha_i(s)$ are $i$-forms on $Y$ depending smoothly on $s\in (-1,1)$. Then
\begin{align*}
    \int_X \omega_r   \wedge \eta  = \frac{1}{2r}
    \int_{Y\times (-1,1)}  \star_Y \alpha \wedge\alpha_1 \wedge ds + \frac{1}{2r}\int_{Y\times (-1,1)} \chi_r'(s)\alpha_2 \wedge  \alpha \wedge ds  
\end{align*}
The integral appearing in the first term on the right side is independent of $r$, and thus the first term converges to zero as $r\to \infty$. The second term is
\[
    \frac{1}{2r}\int_{-1}^1 \chi_r'(s)\left(\int_{Y} \alpha_2 \wedge \alpha \right)ds = \int_Y \alpha_2 \wedge \alpha = \int_{Y} \iota^\ast(\eta)\wedge \alpha,
\]
where we have used that the integral over $Y=Y\times \{s\}$ appearing does not depend on the parameter $s$. This follows from $\eta$ being closed, which implies $[\alpha_2(s)]\in H^2(Y)$ is independent of $s$. We obtain
\[
    \lim_{r\to \infty } \int_X\omega_r \wedge \eta =\int_X \eta \wedge \delta(\alpha),
\]
as desired. Thus far, our argument implies that for all closed $\eta\in \Omega^2(X)$, we have
\begin{equation}\label{eq:secondstep2}
    \lim_{r\to \infty } \frac{1}{2r}f^\ast_r\Phi_r(\alpha) \wedge \eta = \int_X  \delta(\alpha) \wedge \eta.
\end{equation}

We next observe the following, using inequalities \eqref{eq:expdecay} and \eqref{eq:metriccomparison}:
\begin{equation}\label{eq:expdecayagain}
    \| \frac{1}{2r} f_r^\ast  {\mathbf{P}_r}\Phi_r(\alpha) -  \frac{1}{2r} f_r^\ast \Phi_r(\alpha) \|_{X} \leq Cr e^{-cr}\| \frac{1}{2r}f_r^\ast\Phi_r(\alpha)\|_{X}.  
\end{equation}
From the previous paragraph, we have that $\|\frac{1}{2r}f_r^\ast\Phi_r(\alpha)-\omega_r\|_{X}$ converges to zero as $r\to \infty$. Furthermore, by direct calculation, we have
\[
    \| \omega_r \|^2_X = \frac{1}{2r^2}\int_{-1}^1 \chi_r'(s)\|\alpha\|_Y^2 \leq \frac{C}{r} \|\alpha\|_Y^2.
\]
It follows from this and \eqref{eq:expdecayagain} that $\frac{1}{2r} f_r^\ast  {\mathbf{P}_r}\Phi_r(\alpha) -  \frac{1}{2r} f_r^\ast \Phi_r(\alpha)$ converges to zero in $L^2$. With \eqref{eq:secondstep2}, this completes the proof of claim \eqref{eq:secondstep}.\\

\vspace{0.1cm}

\noindent \textbf{Step 2\;\;} Let $\omega\in \mathcal{H}^+_{X_1}$. Then $[f_r^\ast \mathbf{P}_r\Phi_r(\omega)]\in H^2(X)$. To make sense of the de Rham class of $\omega$ on $X$, we recall from \cite{aps-i} that on $X_1(\infty)$, $\omega$ is cohomologous (in the sense of currents) to a smooth form $\omega_c$ with support on $X_1\subset X=X(1)$, say. Then $[\omega_c]\in H^2(X)$ makes sense. A different choice of $\omega_c$ will give the same cohomology class in $H^2(X)$ up to $\text{im}(\delta)$. We claim that
\begin{equation}\label{eq:firststep}
    \lim_{r\to \infty}[f_r^\ast \mathbf{P}_r\Phi_r(\omega)] = [\omega_c]\;\;\text{ mod }\text{im}(\delta)
\end{equation}
in $H^2(X)/\text{im}(\delta)$. A similar claim holds for $X_2$ replacing $X_1$.\\ 

As in Step 1, we use inequalities \eqref{eq:expdecay} and \eqref{eq:metriccomparison} to observe
\begin{equation}\label{eq:expdecayagain2}
    \|  f_r^\ast  {\mathbf{P}_r}\Phi_r(\omega) - f_r^\ast \Phi_r(\omega) \|_{X} \leq Cr e^{-cr}\| \Phi_r(\omega)\|_{X}.  
\end{equation}
Furthermore, $\| \Phi_r(\omega)\|_{X} \leq \| \omega\|_{X_1(\infty)} $ by the construction of $\Phi_r(\omega)$, which applies a cutoff function to $\omega$. Thus $\|f_r^\ast  \mathbf{P}_r\Phi_r(\omega)  - f_r^\ast \Phi_r(\omega)\|_{X}\to 0$ as $r \to \infty$.\\

Next, suppose we have closed $2$-forms $\eta_j$ such that $[\eta_j]$ gives a basis for $H^2(X)$. Then, to show $\lim_{r\to \infty}[f_r^\ast \mathbf{P}_r\Phi_r(\omega)] = [\omega_c]$ in $H^2(X)$ from here, it would suffice to show for all $j$:
\begin{equation}\label{eq:weak2}
    \lim_{r\to \infty} \int_X f_r^\ast \Phi_r(\omega) \wedge \eta_j = \int_X \omega_c \wedge \eta_j.
\end{equation}
Choose compactly supported closed 2-forms $\eta_{i,j}$ on $X_i$ such that $[\eta_{i,j}]$ give a basis of $H_i\subset H_c^2(X_i)$, a subspace which maps isomorphically to $\widehat{H}(X_i)$. Note each $\eta_{i,j}$ may be viewed as a form on $X$. We may further choose closed forms $\eta_k'$ with support in $Y\times (0,1)\subset X$ that induce a basis of $\text{im}(\delta)\subset X$. As in the proof of Lemma \ref{lemma:mv2}, we may then choose $W$ so that
\[
H^2(X)=H_1\oplus H_2\oplus \text{im}(\delta)\oplus W,
\]
where the pairing restricted to $H_1\oplus H_2$ is non-degenerate, $H_1\oplus H_2$ is orthogonal to $\text{im}(\delta)\oplus W$, and there are closed forms $\eta_k''$ inducing a basis of $W$ such that
\[
    \int_X \eta_k' \wedge \eta_l' = \int_X \eta_k''\wedge \eta_l''= 0, \qquad \int_X \eta_k' \wedge \eta_l'' = \delta_{kl}.
\]
We observe that if $[\omega'],[\omega'']\in H^2(X)$ induce the same linear forms in $(H_1\oplus H_2\oplus \text{im}(\delta))^\ast$ via the pairing, then  $[\omega']\equiv [\omega'']\pmod{\text{im}(\delta)}$. Thus to establish \eqref{eq:firststep} it suffices to check \eqref{eq:weak2} for $\eta_j$ among the forms $\eta_{1,j}$, $\eta_{2,j}$, $\eta_k'$. As $\Phi_r(\omega)=\omega$ on $X_1$, where the support of $\eta_{1,j}$ lies, we have
\[
    \int_X f_r^\ast \Phi_r(\omega) \wedge \eta_{1,j} = \int_X \omega_c \wedge \eta_{1,j}.
\]
The support of $f_r^\ast \Phi_r(\omega)$ is disjoint from the supports of $\eta_{2,j}$ and $\eta_k'$,  and so pairs to give zero with these forms, just as is the case for $\omega_c$. Combined with $f_r^\ast  \mathbf{P}_r\Phi_r(\omega) - f_r^\ast \Phi_r(\omega)\to 0$ in $L^2$, this proves \eqref{eq:firststep}. Finally, Steps 1 and 2 combine to prove the proposition in the form \eqref{eq:propeqalt}.

\bibliographystyle{alpha}
%\nocite{*}
\bibliography{main.bbl}

%\bibliography{references}
%\bibliographystyle{alpha}

\Addresses

\end{document}